\pgfplotsset{compat=1.17}
\newcommand{\CB}[1]{{\color{blue}#1}}
\newcommand{\CR}[1]{{\color{red}#1}}
\newcommand{\CG}[1]{{\color{green}#1}}
\pgfplotsset{compat=1.16}
\theoremstyle{plain} 
\newtheorem{theorem}{Theorem}
\newtheorem{lemma}[theorem]{Lemma}
\newtheorem*{theorem*}{Theorem}
\theoremstyle{remark}
\newtheorem*{conjecture}{Conjecture}
\DeclareMathOperator{\Res}{Res}
\author{Sarah May Instanes}
\address{Department of Mathematical Sciences, Norwegian University of Science and Technology (NTNU), NO-7491 Trondheim, Norway} 
\email{sarah.m.instanes@ntnu.no}
\date{\today}
\title{Point evaluation for polynomials on the circle}
\begin{document} 
	\begin{abstract}
		We study the constant $\mathscr{C}_{d,p}$ defined as the smallest constant $C$ such that $\|P\|_\infty^p \leq C\|P\|_p^p$ holds for every polynomial $P$ of degree $d$, where we consider the $L^p$ norm on the unit circle. We conjecture that $\mathscr{C}_{d,p} \leq dp/2+1$ for all $p \geq 2$ and all degrees $d$. We show that the conjecture holds for all $p \geq 2$ when $d \leq 4$ and for all $d$ when $p \geq 6.8$. 
	\end{abstract}
	\subjclass[2020]{Primary 42A05. Secondary 26D05.}
	\keywords{Polynomials, extremal problems, Nikolskii-type inequalities, power trick}
	\maketitle
	\section{introduction}\label{sec:intro}
	Let $\mathbb{T}$ denote the unit circle and let $L^p(\mathbb{T})$ be the set of all functions on $\mathbb{T}$ for which the $L^p$ norm 
	\[\|f\|_p=\left(\int_{-\pi}^{\pi} |f(e^{i \theta})|^p \frac{d \theta}{2 \pi}\right)^{\frac{1}{p}}\] is finite. In this paper we consider the norm of point evaluation at $z=1$ for polynomials of degree at most $d$ on the unit circle under the $L^p$ norm. More precisely we study the constant $\mathscr{C}_{d,p}$ defined as the smallest $C>0$ such that
	\[\|P\|_\infty^p \leq C \|P\|_p^p,\]
	for any polynomial of degree at most $d$. This is a classical problem related to Nikolskii-type inequalities. The case $p=2$ is particularly favourable. It follows from the Cauchy--Schwarz inequality and Parseval's identity that $\mathscr{C}_{d,2}=d+1$. The classical power trick, see e.g.\ Timan \cite[page 229]{Timan}, tells us that for any integer $n$ it holds that $\mathscr{C}_{d,np} \leq \mathscr{C}_{nd,p}$, and consequently it follows from Hölder's inequality that $\mathscr{C}_{d,p} \leq d \lceil p/2\rceil+1$. We propose the following stronger conjecture.

	\begin{conjecture}
		If $d \geq 1$ and $p \geq 2$, then $\mathscr{C}_{d,p} \leq dp/2+1$.
	\end{conjecture}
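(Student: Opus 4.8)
The plan is to pass to the dual extremal problem and then treat even integers, large $p$, and small degrees by different mechanisms. By rotation invariance the supremum defining $\mathscr{C}_{d,p}$ is attained by a polynomial whose maximum modulus sits at $z=1$, so normalising $\|P\|_p=1$ and rotating the peak to $1$ gives
\[
\mathscr{C}_{d,p}=\Bigl(\sup_{\|P\|_p=1}|P(1)|\Bigr)^{\!p}=\frac{1}{\inf\{\|P\|_p^p:\ P(1)=1,\ \deg P\le d\}}.
\]
Thus the conjecture is equivalent to the lower bound $\|P\|_p^p\ge(dp/2+1)^{-1}$ for every $P$ with $P(1)=1$ and $\deg P\le d$. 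At $p=2$ this is an equality, the extremiser being the normalised Fejér-type polynomial $\frac{1}{d+1}\sum_{k=0}^{d}z^k$, and for even integers $p=2m$ the bound is immediate from the power trick and the $L^2$ case, $\mathscr{C}_{d,2m}\le\mathscr{C}_{md,2}=md+1=dp/2+1$. So the line $dp/2+1$ interpolates the even-integer data, and the whole difficulty lies in the non-even exponents, where plain Hölder interpolation between consecutive even integers only reproduces the weaker step bound $d\lceil p/2\rceil+1$.

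For large $p$ uniformly in $d$ I would argue by concentration at the peak. Write $g(\theta)=|P(e^{i\theta})|^2$, a nonnegative trigonometric polynomial of degree $d$ normalised so that $g(0)=\|g\|_\infty=1$. Since $g'(0)=0$ at the maximum, integrating the Bernstein bound on $g''$ yields a quadratic lower envelope $g(\theta)\ge 1-\tfrac{d^2}{4}\theta^2$, where the constant $d^2/4$ uses the sharp Bernstein inequality for nonnegative trigonometric polynomials (which replaces the generic $d^2$ by $d^2/2$ in $\|g''\|_\infty$). Hence
\[
\|P\|_p^p=\frac{1}{2\pi}\int_{-\pi}^{\pi}g(\theta)^{p/2}\,d\theta\ \ge\ \frac{1}{2\pi}\int_{|\theta|\le 2/d}\Bigl(1-\tfrac{d^2}{4}\theta^2\Bigr)^{\!p/2}d\theta=\frac{1}{\pi d}\,B\Bigl(\tfrac12,\tfrac p2+1\Bigr),
\]
and since $B(\tfrac12,\tfrac p2+1)=\sqrt{\pi}\,\Gamma(\tfrac p2+1)/\Gamma(\tfrac p2+\tfrac32)\sim\sqrt{2\pi/p}$, this gives $\mathscr{C}_{d,p}\le\sqrt{\pi/2}\,d\sqrt{p}\,(1+o(1))$. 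As $\sqrt{p}$ eventually falls below $p/2$, comparing the exact Beta-function constant with $dp/2+1$ proves the conjecture for all $d$ once $p$ exceeds the explicit crossover, which sits near $2\pi$ and is pushed to the stated value by a more careful (non-asymptotic) treatment of the integral.

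For small degrees $d\le4$ and all $p\ge2$ I would instead attack the dual problem head-on: after normalising $P(1)=1$ and using the rotation and conjugation symmetries to cut down the free real parameters, $\|P\|_p^p$ becomes an explicit function of the few coefficients, and one can locate the minimiser via the Lagrange conditions and verify $\|P\|_p^p\ge(dp/2+1)^{-1}$ on the resulting low-dimensional family as a function of $p$. The main obstacle — and the reason the conjecture stays open in general — is the intermediate range of moderate $p$ (roughly $2<p<6.8$) paired with large $d$: there the peak-concentration estimate is still too lossy, because the crude quadratic envelope $1-\tfrac{d^2}{4}\theta^2$ underestimates the true extremal profile, while the even-integer interpolation wastes exactly the factor that separates $d\lceil p/2\rceil+1$ from $dp/2+1$. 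Closing this gap seems to require the precise shape of the extremal polynomial near $z=1$, i.e.\ the sharp curvature and mass distribution of the minimiser, which is the hard part.
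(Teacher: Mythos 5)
You are attempting to prove what the paper itself only states as a conjecture: the paper establishes it for $d\le 4$ and all $p\ge 2$ (Theorem \ref{thm:main} combined with the power trick, Lemma \ref{lem:PowerAlt2}) and for all $d$ when $p\ge 6.8$ (Theorem \ref{thm:plarge}), and explicitly leaves the remaining range open. Your proposal has the same three-part shape --- even integers by the power trick, large $p$ by concentration at the peak, small degrees by direct optimization --- and your own closing paragraph concedes that moderate $p$ paired with large $d$ stays out of reach, so as a proof of the stated conjecture it has an unavoidable gap. Beyond that, the part that would need to carry real weight for $d\le 4$ is not an argument: for non-even $p$ the norm $\|P\|_p^p$ is a transcendental function of the coefficients, and ``locate the minimiser via the Lagrange conditions and verify'' is precisely the difficulty, not a method for resolving it. The paper's actual proof of Theorem \ref{thm:main} proceeds quite differently: a contour-integral generalization of the power trick (Theorem \ref{thm:24analogi}, packaged as Theorem \ref{thm:cor61analogi}) that represents $|P(1)|^{p/2}$ as a weighted integral of $|P|^{p/2}$ against kernels indexed by the zero configuration, zero-localization inputs (Tur\'an's bound $t_1\ge \pi/d$ in Lemma \ref{lem:Turan}, and $t_2\ge\pi/2$ for $d=4$ in Lemma \ref{lem:d4pi2}), and a maximization over ``interval jumps'' (Lemmas \ref{lem:tleq}--\ref{lem:d4p34}, Theorem \ref{thm:uppergood}). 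None of this has a counterpart in your sketch.

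In your large-$p$ argument there are two concrete problems. First, the envelope $g(\theta)\ge 1-\tfrac{d^2}{4}\theta^2$ rests on an unreferenced ``sharp Bernstein inequality for nonnegative trigonometric polynomials''; the citable route to exactly this estimate is the lemma the paper uses (Lemma \ref{lem:Cavllius}, Hylt\'en-Cavallius): $|P(e^{i\theta})|\ge\|P\|_\infty\cos(d\theta/2)$ for $|\theta|\le\pi/d$, followed by $\cos^2 x\ge 1-x^2$. Second, and more damaging, the quadratic envelope is strictly lossier than $\cos^p(d\theta/2)$, and the loss matters quantitatively: your bound is $\mathscr{C}_{d,p}\le \pi d/B(\tfrac12,\tfrac p2+1)$, and the coefficient inequality $\pi/B(\tfrac12,\tfrac p2+1)\le p/2$, which is what you need uniformly in $d$, first holds only near $p\approx 7.6$. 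For instance at $p=7$ one has $\tfrac p2 B(\tfrac12,\tfrac92)\approx 3.01<\pi$, so your bound is about $3.66\,d$, which exceeds $dp/2+1=3.5d+1$ for all $d\ge 7$. No ``more careful (non-asymptotic) treatment of the integral'' can repair this, because the deficit is pointwise in the envelope, not in the integration; integrating $\cos^p(d\theta/2)$ itself is what produces the paper's constant $\pi d/B(\tfrac{p+1}{2},\tfrac12)$ and its threshold $p\ge 6.8$. Your even-integer observation is correct (it is the classical power trick, and the conjectured line indeed interpolates those values); note also the paper's remark that, by the same trick, it suffices to verify the conjecture for $2\le p\le 4$ and all $d$ --- which is exactly where the genuinely open problem lives and where neither your proposal nor the paper has an argument.
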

	Again by applying the power trick, this conjecture holds for all $p \geq 2$ if verified for $2 \leq p \leq 4$ and all degrees $d$. We also conjecture that $\mathscr{C}_{d,p}/(dp/2+1)$ is decreasing in $p$. Our main result reads as follows.
	\begin{theorem}\label{thm:main}
		If $1 \leq d \leq 4$ and $2 \leq p \leq 4$, then 
		\[\mathscr{C}_{d,p} \leq \frac{dp}{2}+1.\]
	\end{theorem}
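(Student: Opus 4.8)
The plan is to first strip away the parts that are already understood and then recast the remaining problem in a form where the two accessible exponents $p=2$ and $p=4$ sit naturally as endpoints. Since the measure $\frac{d\theta}{2\pi}$ is rotation invariant, replacing $P(z)$ by $P(e^{i\theta_0}z)$ leaves every $\|P\|_p$ unchanged while moving the point of evaluation; hence I may assume that the supremum $\|P\|_\infty$ is attained at $z=1$, i.e. $\|P\|_\infty=|P(1)|$. The endpoint $p=2$ is exact, $\mathscr{C}_{d,2}=d+1$, by Parseval and Cauchy--Schwarz, and the endpoint $p=4$ follows from the power trick applied with $n=2$ together with the $p=2$ case: $\mathscr{C}_{d,4}\le\mathscr{C}_{2d,2}=2d+1$. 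Both of these agree with the claimed bound $dp/2+1$ at $p=2$ and $p=4$, so the content of the theorem lies entirely in the open range $2<p<4$.

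For the interior I would reformulate the inequality in terms of nonnegative trigonometric polynomials. Writing $p=2q$ with $q\in(1,2)$ and $F(\theta)=|P(e^{i\theta})|^2$, the function $F$ is a nonnegative trigonometric polynomial of degree at most $d$ with $F(0)=|P(1)|^2=\|F\|_\infty$, and conversely every such $F$ arises this way by the Fej\'er--Riesz factorization. Since $\|P\|_p^p=\int_{-\pi}^{\pi}F^q\,\frac{d\theta}{2\pi}$ and $\|P\|_\infty^p=F(0)^q$, the theorem becomes the Nikolskii-type inequality for nonnegative polynomials
\[
F(0)^q\le(dq+1)\int_{-\pi}^{\pi}F^q\,\frac{d\theta}{2\pi},\qquad 1\le q\le 2,
\]
valid for every nonnegative trigonometric polynomial $F$ of degree at most $d$. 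In this formulation the endpoint $q=1$ is exactly the Fej\'er/$L^2$ inequality $F(0)\le(d+1)\int F$, and the endpoint $q=2$ is the Cauchy--Schwarz bound $F(0)^2\le(2d+1)\int F^2$ on the $2d+1$ Fourier coefficients of $F$; the reformulation thus builds in both accessible cases and isolates the genuine difficulty at non-integer $q$.

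To treat $1<q<2$ for each fixed degree $d\in\{1,2,3,4\}$ I would parametrize the admissible $F$ explicitly. Because $F$ is real, nonnegative, and attains its maximum at $\theta=0$, it is a cosine polynomial with $F'(0)=0$, which together with the Fej\'er--Riesz representation $F=|P|^2$ and the remaining rotation and scaling freedom cuts the number of free real parameters down to a small number (one for $d=1$, and a handful for $d=2,3,4$). The inequality then reduces to showing that the minimum of $\int F^q$ over this low-dimensional, compact parameter region is at least $F(0)^q/(dq+1)$, uniformly for $q\in[1,2]$. I would analyze this by combining the exact endpoint identities with estimates on $\int F^q$ obtained from the concentration of $F$ near its maximum (controlled by $F'(0)=0$ and a Bernstein-type bound on $F''(0)$), reducing the verification to a finite check of boundary and critical configurations.

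The main obstacle is precisely the non-integer exponent: unlike $q=1$ and $q=2$, the integral $\int F^q$ has no closed form and is not itself a trigonometric polynomial, so it cannot be evaluated coefficient-by-coefficient. Moreover a naive interpolation in $q$ (or $p$) cannot work, because the target inequality is tight at $q=1$ (attained by the Fej\'er kernel) while it carries strict slack at $q=2$; convexity of $q\mapsto\log\int F^q$ therefore does not by itself propagate the endpoint bounds into the interior. The argument must instead engage the actual geometry of the extremal $F$, and it is exactly the need to keep this analysis finite and checkable that confines the method to small degrees $d\le 4$.
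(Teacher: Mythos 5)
Your endpoint analysis and the Fej\'er--Riesz reformulation are correct as far as they go: $p=2$ is Parseval plus Cauchy--Schwarz, $p=4$ follows from the power trick (Lemma \ref{lem:PowerAlt2}) via $\mathscr{C}_{d,4}\le\mathscr{C}_{2d,2}=2d+1$, and the statement for $2<p<4$ is indeed equivalent to $F(0)^q\le(dq+1)\int_{-\pi}^{\pi}F^q\,\frac{d\theta}{2\pi}$ for nonnegative trigonometric polynomials $F$ of degree at most $d$, with $q=p/2$. But everything after that is a plan, not a proof, and the plan is where the entire content of the theorem lives. Minimizing $\int F^q$ over your ``low-dimensional, compact parameter region'' subject to $F(0)=1$ is not a reduction at all: it is literally the extremal problem \eqref{eq:extprob} restated (with $F=|\varphi_{d,p}|^2$). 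You yourself note that $\int F^q$ has no closed form for non-integer $q$, so a ``finite check of boundary and critical configurations'' is not available --- identifying the critical configurations is exactly the hard part, and your sketch produces no actual estimate. Worse, the specific tools you name cannot deliver the constant: the inequality holds with equality at $q=1$ (Fej\'er kernel), so any lossy ingredient --- concentration of $F$ near its maximum, a Bernstein-type bound on $F''(0)$ --- must carry an error that vanishes as $q\to 1^+$ uniformly over the parameter region, and nothing in the proposal provides that.

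The paper's proof supplies precisely the device your sketch is missing. Theorem \ref{thm:24analogi}, a contour-integral ``extended power trick,'' represents $|P(1)|^{p/2}$ \emph{exactly} as a sum of integrals of $|P(e^{i\theta})|^{p/2}$ against an explicit sine kernel determined only by the arguments of the zeroes of $P$ (all of which lie on the circle, by Lemma \ref{lem:structure}). Applying Cauchy--Schwarz --- which is sharp at $p=2$, so nothing is lost at the tight endpoint --- yields Theorem \ref{thm:cor61analogi}, bounding $\mathscr{C}_{d,p}$ by the functional $E_{d,p}(\tau)$ of the zero configuration alone, whose integrand is elementary and computable. The remaining work is to constrain and optimize over configurations: Tur\'an's bound $t_1\ge\pi/d$ (Lemma \ref{lem:Turan}) and, for $d=4$, the reflection argument giving $t_2\ge\pi/2$ (Lemma \ref{lem:d4pi2}), followed by Lemmas \ref{lem:tleq}, \ref{lem:mid} and \ref{lem:d4p34}, pin down the maximizing $\tau$; this gives Theorem \ref{thm:uppergood}, and a final explicit computation shows the resulting bound is at most $dp/2+1$. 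So the gap in your proposal is concrete: before any finite check can begin, you need a mechanism that converts the non-polynomial functional $\int F^q$ into an explicitly evaluable quantity, and it is exactly this representation formula --- not concentration or Bernstein estimates --- that makes the paper's finite-dimensional optimization genuinely finite.
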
	
	In fact we improve the above bound slightly, as stated in Theorem \ref{thm:uppergood} and illustrated in Figure \ref{fig:upd234}.
	The problem of determining $\mathscr{C}_{d,p}$ can be equivalently formulated as the extremal problem 
	\begin{equation}\label{eq:extprob}
		\frac{1}{\mathscr{C}_{d,p}} = \inf_{P \in \mathscr{P}_d} \{ \|P\|_p^p : P(1)=1\},
	\end{equation}
	where $\mathscr{P}_d$ consists of all polynomials of degree at most $d$. 
	For $1<p<\infty$, the extremal problem \eqref{eq:extprob} has a unique solution $\varphi_{d,p}$. The extremal function $\varphi_{d, p}$ is of degree $d$ and has $d$ zeroes on the unit circle $\mathbb{T}$ and $P(e^{i \theta})=\overline{P(e^{-i \theta})}$. 
	\begin{figure}
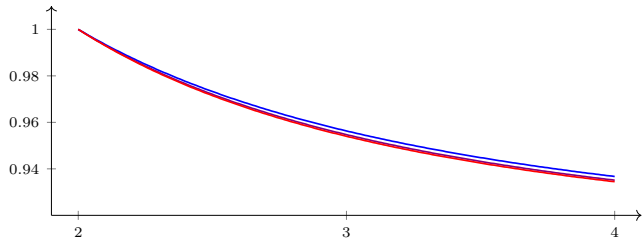

		\centering
		\begin{tikzpicture}[scale=0.8]
			\begin{axis}[
				width=0.9\linewidth,
				height=0.4\linewidth,
				axis lines=middle,
				xmin=1.9, xmax=4.1,
				ymin=0.92, ymax=1.01,
				xtick={2,3,4},
				xticklabels={$\scriptstyle 2$,$\scriptstyle 3$,$\scriptstyle 4$},
				ytick={0.86, 0.88, 0.9, 0.92, 0.94, 0.96, 0.98, 1},
				yticklabels={$\scriptstyle 0.86$, $\scriptstyle 0.88$, $\scriptstyle 	0.9$, $\scriptstyle 0.92$,$\scriptstyle 0.94$, $\scriptstyle 0.96$,$\scriptstyle 0.98$, $\scriptstyle 1$},
				every axis x label/.style={at={(ticklabel* cs:0.1)}, anchor=west,},
				every axis y label/.style={at={(ticklabel* cs:0.1)}, anchor=south,},
				axis line style={->}
				]
				\input{up_d_2}
				\input{up_d_3}
				\input{up_d_4}
			\end{axis}
		\end{tikzpicture}
		\caption{The upper bound for $\mathscr{C}_{d,p}/(dp/2+1)$ for \CB{$d=2$}, \CG{$d=3$} and \CR{$d=4$} from Theorem \ref{thm:uppergood}.}
		\label{fig:upd234}
	\end{figure}
	Brevig, Chirre, Ortega-Cerdà and Seip \cite{Brevig} have studied the related problem of determining $\mathscr{C}_p$ defined as the smallest constant $C$ such that $|f(0)|^p \leq C\|f\|_p^p$
	for all functions in the Paley--Wiener space $PW^p$. Levin and Lubinsky \cite{LevinLubinski} have shown that these two problems are connected via the limit 
	\[\lim_{d \to \infty} \frac{\mathscr{C}_{d,p}}{d}=\mathscr{C}_p.\]
	Our conjecture implies the result $\mathscr{C}_p \leq p/2$ from \cite{Brevig}. Similarly to what was done in \cite{Brevig}
	we provide an extended power trick (see Theorem \ref{thm:24analogi}) and use information on the zeroes of the extremal function in order to prove Theorem \ref{thm:main}. The obstacle to further progress is the limited information on the zeroes. In \cite{Brevig} information about the zeroes is obtained by considering prolate spheroidal wave functions.
	
	For fixed values of $d$ and $p$ we can obtain a good lower bound for $\mathscr{C}_{d,p}$ by numerically optimizing over the zeroes of the polynomial. The lower bound  obtained numerically for $d=4$ and $2 \leq p \leq 4$ is seen in Figure \ref{fig:d4}, together with the corresponding upper bound. In Section \ref{sec:num} we also show that a certain specific polynomial provides a lower bound very close to that obtained by numerical optimization. 
	
	We also consider upper bounds for $\mathscr{C}_{d,p}$ for larger values of $p$ and prove the following result. In this paper $B(x,y)$ denotes the beta function. 
	\begin{theorem}\label{thm:plarge}
		If $d \geq 1$ and $p \geq 1$, then 
		\[\mathscr{C}_{d,p} \leq \frac{d\pi}{B((p+1)/2,1/2)}.\]
	\end{theorem}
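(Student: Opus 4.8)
The plan is to reduce the statement to a sharp pointwise lower bound on $|P|$ near its maximum, and then integrate. First I would exploit rotation invariance: given $P \in \mathscr{P}_d$, choose $\theta^\ast$ with $|P(e^{i\theta^\ast})| = \|P\|_\infty$ and replace $P$ by the polynomial $z \mapsto c\,P(e^{i\theta^\ast}z)$ for a suitable unimodular constant $c$. This preserves both $\|P\|_p$ and $\|P\|_\infty$, so I may assume the maximum modulus is attained at $z=1$ with $P(1) = \|P\|_\infty$. After dividing by $\|P\|_\infty$ it then suffices to show that every $P \in \mathscr{P}_d$ with $P(1) = \|P\|_\infty = 1$ satisfies $\|P\|_p^p \geq B((p+1)/2,1/2)/(\pi d)$, since rearranging this inequality is exactly the asserted bound on $\mathscr{C}_{d,p}$.

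The heart of the matter is the pointwise estimate
\[ |P(e^{i\theta})| \ \geq\ \cos\!\Big(\frac{d\theta}{2}\Big), \qquad |\theta| \leq \frac{\pi}{d}. \]
Granting it, I would restrict the integral defining $\|P\|_p^p$ to the arc $|\theta| \leq \pi/d$, bound the integrand below by $\cos^p(d\theta/2)$, and substitute $\psi = d\theta/2$ to obtain $\tfrac{1}{\pi d}\int_{-\pi/2}^{\pi/2}\cos^p\psi\,d\psi = \tfrac{1}{\pi d}B((p+1)/2,1/2)$, using the identity $B((p+1)/2,1/2) = \int_{-\pi/2}^{\pi/2}\cos^p\psi\,d\psi$. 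This yields the theorem directly.

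For the pointwise estimate, the naive moves fail: applying a Bernstein-type inequality to $\mre P$, or to the nonnegative trigonometric polynomial $|P|^2$, only produces a bound of the shape $\cos(d\theta)$, which is off by the crucial factor of two in the argument and would give $2\pi d/B$ in place of $\pi d/B$. The device I would use is to halve the effective degree by a frequency shift. Writing $\Pi(\theta) = e^{-id\theta/2}P(e^{i\theta})$, the function $\Pi$ has frequencies in $[-d/2,d/2]$, so $g(\theta) = \mre\Pi(\theta)$ is a real trigonometric polynomial with frequencies bounded by $d/2$ (hence of exponential type $d/2$); moreover $\|g\|_\infty \leq \|P\|_{\mathbb{T}} = 1$ and $g(0) = \mre P(1) = 1$. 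I would then invoke the Bernstein--Szeg\H{o} inequality $g'(\theta)^2 + (d/2)^2 g(\theta)^2 \leq (d/2)^2$, which gives $|(\arccos g)'| \leq d/2$ almost everywhere; since $\arccos g(0) = 0$, integrating yields $\arccos g(\theta) \leq d|\theta|/2$, that is $g(\theta) \geq \cos(d\theta/2)$ for $|\theta| \leq 2\pi/d$. Finally $|P(e^{i\theta})| = |\Pi(\theta)| \geq \mre\Pi(\theta) = g(\theta) \geq \cos(d\theta/2)$, which is the claimed estimate.

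The main obstacle is precisely securing the sharp half-angle constant in the pointwise bound. The extremal polynomial $P(z) = (1+z^d)/2$, for which $|P(e^{i\theta})| = |\cos(d\theta/2)|$, shows that $\cos(d\theta/2)$ is the correct decay profile and that no coarser estimate can give the stated constant. The frequency-shift reduction to exponential type $d/2$ followed by Bernstein--Szeg\H{o} is the one step that must be executed carefully; the remaining reduction and the passage to the beta function are routine changes of variables.
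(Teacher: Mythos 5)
Your proposal is correct and follows essentially the same route as the paper: rotate and normalize so that $P(1)=\|P\|_\infty=1$, establish the pointwise bound $|P(e^{i\theta})|\ge\cos(d\theta/2)$ for $|\theta|\le\pi/d$, and integrate its $p$-th power over that arc to obtain $\|P\|_p^p\ge B((p+1)/2,1/2)/(\pi d)$. The only difference is that the paper imports the pointwise bound as a lemma of Hylt\'en-Cavallius, whereas you prove it from scratch via the frequency shift $e^{-id\theta/2}P(e^{i\theta})$ and the Bernstein--Szeg\H{o} inequality; that argument is sound (it is in effect a proof of the cited lemma), so nothing is missing.
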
	
	Due to well-known asymptotics for the beta function it follows that %from Theorem \ref{thm:plarge} that as $p \to \infty$ we have 
	\[\mathscr{C}_{d,p} \leq d\left( \sqrt{\frac{\pi p}{2}}+O\left(\frac{1}{\sqrt{p}}\right) \right),\]
	as $p \to \infty$. In particular if $p \geq 6.8$ then the conjectured upper bound  $\mathscr{C}_{d,p} \leq dp/2+1$ holds for all degrees $d$.
	
	This paper is organized as follows. We start with some preliminary results 	in Section 2. Next, in Section~\ref{sec:Genpower} ~we prove a generalized power trick, which provides the foundations for proving Theorem \ref{thm:main}. In Section \ref{sec:tn} we give two results on the zeroes of the extremal functions in \eqref{eq:extprob}. Section \ref{sec:mainproof} contains the proof of Theorem \ref{thm:main}. In Section \ref{sec:plarge} we prove Theorem \ref{thm:plarge}. Finally in Section \ref{sec:num} we provide some numerical estimates for the zeroes of $\varphi_{d, p}$ and corresponding lower bounds for $\mathscr{C}_{d,p}$. 
	\begin{figure}
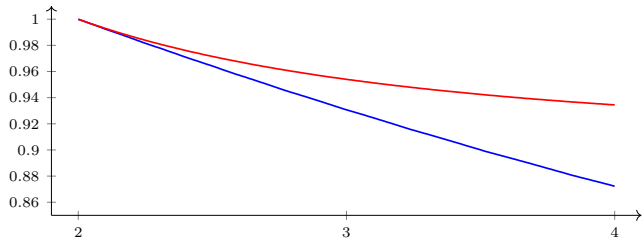

		\centering
		\begin{tikzpicture}[scale=0.8]
			\begin{axis}[
				width=0.9\linewidth,
				height=0.4\linewidth,
				axis lines=middle,
				xmin=1.9, xmax=4.1,
				ymin=0.85, ymax=1.01,
				xtick={2,3,4},
				xticklabels={$\scriptstyle 2$,$\scriptstyle 3$,$\scriptstyle 4$},
				ytick={0.86, 0.88, 0.9, 0.92, 0.94, 0.96, 0.98, 1},
				yticklabels={$\scriptstyle 0.86$, $\scriptstyle 0.88$, $\scriptstyle 	0.9$, $\scriptstyle 0.92$,$\scriptstyle 0.94$, $\scriptstyle 0.96$,$\scriptstyle 0.98$, $\scriptstyle 1$},
				every axis x label/.style={at={(ticklabel* cs:0.1)}, anchor=west,},
				every axis y label/.style={at={(ticklabel* cs:0.1)}, anchor=south,},
				axis line style={->}
				];
				\input{low_d_4_optimizer}
				\input{up_d_4}
			\end{axis}
		\end{tikzpicture}
		\caption{The \CB{lower} and \CR{upper} bound for $\mathscr{C}_{4,p}/(2p+1)$. The \CB{lower} bound is obtained by numerical optimization, while the \CR{upper} bound is from Theorem \ref{thm:uppergood}. }
		\label{fig:d4}
	\end{figure}
	\section{Preliminaries}\label{sec:prelims}
	We start with some preliminary results on the existence and structure of the solution of \eqref{eq:extprob}.
	
	\begin{lemma}
		Let $1<p<\infty$ and $d \geq 1$. Then there exists a unique solution of \eqref{eq:extprob}.
	\end{lemma}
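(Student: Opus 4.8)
The plan is to exploit that $\mathscr{P}_d$ is finite-dimensional and that $P \mapsto \|P\|_p^p$ is a continuous, strictly convex functional on the affine constraint set $\mathcal{F} = \{P \in \mathscr{P}_d : P(1) = 1\}$, so that existence follows from compactness and uniqueness from strict convexity.

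First, for existence, I would note that a nonzero polynomial has only finitely many zeros on $\mathbb{T}$, so $\|\cdot\|_p$ defines a genuine norm on the finite-dimensional space $\mathscr{P}_d$; in particular all norms on $\mathscr{P}_d$ are equivalent and $\|\cdot\|_p$-bounded sets are precompact. The set $\mathcal{F}$ is nonempty (it contains the constant $1$) and closed. Writing $m^p$ for the infimum in \eqref{eq:extprob}, I restrict attention to the sublevel set $\{P \in \mathcal{F} : \|P\|_p \leq 1\}$, which is nonempty (again the constant $1$, for which $\|1\|_p = 1$) and, being closed and bounded in a finite-dimensional space, compact. Since every $P \in \mathcal{F}$ outside this set satisfies $\|P\|_p^p > 1$, the infimum over $\mathcal{F}$ equals the infimum over the compact set, where the continuous functional $\|\cdot\|_p^p$ attains its minimum; this produces an extremal $\varphi_{d,p}$.

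For uniqueness I would use strict convexity. The key pointwise fact is that $z \mapsto |z|^p$ is strictly convex on $\mathbb{C}$ for $1 < p < \infty$: if $z_1 \neq z_2$, then either they lie on a common ray from the origin, in which case $|z_1| \neq |z_2|$ and strict convexity of $t \mapsto t^p$ on $[0,\infty)$ gives the strict midpoint inequality, or they do not, in which case the strict triangle inequality $|\tfrac{z_1+z_2}{2}| < \tfrac{|z_1|+|z_2|}{2}$ together with the monotonicity and convexity of $t \mapsto t^p$ does. Now suppose $P_1 \neq P_2$ are both extremal, so $\|P_1\|_p = \|P_2\|_p = m$. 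Their average $Q = (P_1+P_2)/2$ again lies in $\mathcal{F}$. Because two distinct polynomials of degree at most $d$ agree at only finitely many points, $P_1(e^{i\theta}) \neq P_2(e^{i\theta})$ for almost every $\theta$, so integrating the strict pointwise inequality yields $\|Q\|_p^p < \tfrac12\bigl(\|P_1\|_p^p + \|P_2\|_p^p\bigr) = m^p$, contradicting the minimality of $m$. Hence the extremal function is unique.

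The routine part is existence, which is essentially finite-dimensionality plus the coercivity of a norm. The only mild subtlety is the pointwise strict convexity of $|z|^p$ on $\mathbb{C}=\mathbb{R}^2$: the composition of $t \mapsto t^p$ with the modulus is not automatically strictly convex and must be handled by the two-case argument above. Equivalently, one may phrase this through the equality case of Minkowski's inequality, which for $1<p<\infty$ forces $P_1$ and $P_2$ to be nonnegative scalar multiples of one another and hence, under $P_1(1)=P_2(1)=1$, equal.
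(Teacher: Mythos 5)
Your proof is correct and follows essentially the same route as the paper: existence via compactness in the finite-dimensional space $\mathscr{P}_d$, and uniqueness via strict convexity of the $L^p$ norm. The only difference is that you fill in details the paper leaves implicit, namely the boundedness of the relevant sublevel set (justifying the compactness step) and an explicit proof of strict convexity rather than a citation of it.
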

	
	\begin{proof}
		Let $(P_n)_n$ be a sequence in $\mathscr{P}_d$ such that $P_n(1)=1$ and $(\|P_n\|_p^p)_n$ converges to $1/\mathscr{C}_{d,p}$. Since the space $\mathscr{P}_d$ is a finite dimensional vector space there exists a subsequence of polynomials $(P_{n_k})_{n_k}$ that converges to a polynomial $P$ in $\mathscr{P}_d$ such that $\|P\|_p^p=1/\mathscr{C}_{d,p}$, and so a solution exists. Uniqueness follows from strict convexity of $L^p(\mathbb{T})$.
	\end{proof}

	As the extremal function $\varphi_{d, p}$ is unique, we can also deduce its structure. 
	
	\begin{lemma}\label{lem:structure}
		Let $1<p<\infty$. Let $d \geq 1$, and let $k=\lfloor d/2 \rfloor$. Then the unique solution of \eqref{eq:extprob} is of the form
		\[\varphi_{d, p}(z)= C\prod_{j=1}^{k} (z-e^{i t_j})(z-e^{-i t_j}),\]
		for $d$ even and 
		\[\varphi_{d, p}(z)=C(z+1) \prod_{j=1}^{k} (z-e^{i t_j})(z-e^{-i t_j}),\]
		for $d$ odd, where $0 < t_1 \leq t_2 \leq \dots \leq t_{k} \leq \pi$ and $C$ is a normalization constant ensuring that $\varphi_{d,p}(1)=1$.
	\end{lemma}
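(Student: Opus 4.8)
The plan is to read off the structure from the uniqueness of $\varphi_{d,p}$ together with the behaviour of the $L^p$ norm on $\mathbb{T}$ under two natural symmetries of the problem: complex conjugation of the coefficients, and reflection of a single zero across the circle. First I would pin down the symmetry. Consider the involution sending $P(z)=\sum a_k z^k$ to $P^{\sharp}(z):=\overline{P(\bar z)}=\sum \overline{a_k}z^k$. Since $\varphi_{d,p}(1)=1$ is real we have $P^{\sharp}(1)=P(1)$, and the substitution $\theta\mapsto-\theta$ in the defining integral gives $\|P^{\sharp}\|_p=\|P\|_p$; hence $\varphi_{d,p}^{\sharp}$ is again a minimizer of \eqref{eq:extprob}. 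By uniqueness $\varphi_{d,p}^{\sharp}=\varphi_{d,p}$, so the coefficients are real. This is exactly the stated relation $\varphi_{d,p}(e^{i\theta})=\overline{\varphi_{d,p}(e^{-i\theta})}$, and it forces the non-real zeros to occur in conjugate pairs. Moreover $\varphi_{d,p}(1)=1\neq 0$, so $z=1$ is never a zero.

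The heart of the argument is that every zero lies on $\mathbb{T}$. Suppose $\varphi_{d,p}$ had a zero $a$ with $0<|a|$ and $|a|\neq 1$, and write $\varphi_{d,p}=(z-a)Q(z)$. Reflecting this one zero across the circle, set $\psi(z)=(z-1/\bar a)Q(z)$. The identity $|e^{i\theta}-a|=|a|\,|e^{i\theta}-1/\bar a|$, valid on $\mathbb{T}$, gives $|\varphi_{d,p}|=|a|\,|\psi|$ pointwise on $\mathbb{T}$, and a short computation yields $|\psi(1)|=1/|a|$. Consequently $\psi/\psi(1)\in\mathscr{P}_d$ takes the value $1$ at $z=1$ and satisfies $\|\psi/\psi(1)\|_p=\|\varphi_{d,p}\|_p$, so it is again a minimizer. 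Uniqueness forces $\psi/\psi(1)=\varphi_{d,p}$; cancelling the common factor $Q$ leaves the polynomial identity $\psi(1)(z-a)=z-1/\bar a$, whence $a=1/\bar a$, i.e. $|a|=1$, a contradiction. The leftover case $a=0$ is excluded separately: if $\varphi_{d,p}(z)=z^{m}R(z)$ with $R(0)\neq 0$ and $m\geq 1$, then $|\varphi_{d,p}|=|R|$ on $\mathbb{T}$ and $R(1)=1$, so $R\in\mathscr{P}_d$ is a minimizer of the same norm, and uniqueness gives $z^{m}R=R$, which is absurd. Hence all zeros lie on $\mathbb{T}$.

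It remains to show the degree is exactly $d$ and then assemble. The first-variation (Euler--Lagrange) condition for \eqref{eq:extprob} reads $\int_{-\pi}^{\pi}|\varphi_{d,p}|^{p-2}\,\overline{\varphi_{d,p}}\,e^{ik\theta}\,\frac{d\theta}{2\pi}=\overline{\mu}$ for $k=0,1,\dots,d$, with $\mu$ the multiplier of the constraint $P(1)=1$. If $\varphi_{d,p}$ had degree $m<d$ it would also be the degree-$m$ extremal, and I would aim for a contradiction by testing the feasible variation $z^{m+1}-1$ (and its rotation by $i$), whose derivative of $\|\cdot\|_p^p$ is governed by the $(m{+}1)$-st moment above: a contradiction follows as soon as that moment differs from $\overline{\mu}$, i.e. once $\mathscr{C}_{d,p}$ is known to be strictly increasing in $d$. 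I expect this to be the main obstacle, since for non-integer $p$ the function $|\varphi_{d,p}|^{p-2}\overline{\varphi_{d,p}}$ is not a trigonometric polynomial and its Fourier coefficients are hard to control — precisely the ``limited information on the zeroes'' flagged in the introduction (for $p=2$ the function is band-limited and the conclusion is immediate). Granting degree exactly $d$, the assembly is bookkeeping: the $d$ zeros lie on $\mathbb{T}$, are invariant under conjugation, and avoid $z=1$, so the genuinely non-real ones pair as $e^{\pm i t_j}$, zeros at $-1$ correspond to $t_j=\pi$, and when $d$ is odd the parity of $d$ forces an odd number of zeros at $-1$, one of which is displayed as the factor $(z+1)$. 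Ordering the angles as $0\le t_1\le\cdots\le t_k\le\pi$ with $k=\lfloor d/2\rfloor$ and choosing $C$ so that the value at $z=1$ equals $1$ yields the two displayed forms.
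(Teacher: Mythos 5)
Your symmetry step and your proof that every zero lies on $\mathbb{T}$ are both correct, and the latter takes a genuinely different route from the paper. The paper first uses uniqueness applied to $z^d\overline{P(\bar z)}$ to show that zeros come in pairs $re^{it}$, $(1/r)e^{it}$, and then strictly decreases the $L^p$ norm by subtracting a small multiple of $(z-1)^2$ from that quadratic factor, contradicting extremality. You instead reflect a single zero $a$ across the circle, use the identity $|e^{i\theta}-a|=|a|\,|e^{i\theta}-1/\bar a|$ to see that the renormalized reflected polynomial $\psi/\psi(1)$ is again a feasible minimizer, and then read off $a=1/\bar a$ from uniqueness. This is clean: it avoids the $\varepsilon$-perturbation and the separate pairing step, and it rests on exactly the same uniqueness lemma. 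Your exclusion of a zero at the origin ($\varphi_{d,p}=z^mR$ forces $z^mR=R$ by uniqueness) is also correct.

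The genuine gap is the claim that $\deg\varphi_{d,p}=d$. You leave it unproved, presenting it as the main obstacle and sketching an Euler--Lagrange/moment argument that is conditional on strict monotonicity of $\mathscr{C}_{d,p}$ in $d$, which you do not establish. None of that machinery is needed: the very trick you used for the zero at the origin settles it. If $\deg\varphi_{d,p}=n<d$, then $Q(z)=z\varphi_{d,p}(z)$ lies in $\mathscr{P}_d$, satisfies $Q(1)=1$ and $\|Q\|_p=\|\varphi_{d,p}\|_p$ because $|z|=1$ on $\mathbb{T}$, so $Q$ is another minimizer of \eqref{eq:extprob}; uniqueness would force $z\varphi_{d,p}=\varphi_{d,p}$, which is absurd. (This one-line argument is precisely the paper's.) The step matters for the conclusion, not just for completeness: your final assembly counts $d$ zeros on the circle, and for odd $d$ it is the odd total number of zeros that forces at least one zero at $-1$; a minimizer of even degree $d-1$ could pair all its zeros as $e^{\pm it_j}$ and the displayed factor $(z+1)$ would not follow. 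So as written the proposal is incomplete, but the missing piece is the trivial step, not the hard one you anticipated.
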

	
	\begin{proof}
		Assume $P$ is a solution of \eqref{eq:extprob} of degree $n<d$ and let $Q(z)=zP(z)$ be a polynomial of degree at most $d$. Then $\|Q\|_p=\|P\|_p$ and $Q(1)=P(1)=1$. However, since the extremal function is unique this is not possible, so the extremal function must have degree $d$.
		
		Next, let $P(z)=a_0+a_1 z + \dots + a_d z^d$ be a solution of \eqref{eq:extprob}. If $a_0=0$ then the polynomial $P(z)/z$ is also a solution of \eqref{eq:extprob}, and so we must have $a_0 \neq 0$. Define the polynomial $Q$ by $Q(z)=z^d \overline{P(1/\overline{z})}=\overline{a_d}+ \overline{a_{d-1}}z + \dots + \overline{a_0}z^d$. Then $Q$ is of degree $d$ with $Q(1)=P(1)=1$ and $\|Q\|_p=\|P\|_p$. Since the extremal function is unique it follows that $Q=P$. In particular if $re^{i t}$ (with $r>0$) is a zero of $P$, then so is $(1/r) e^{i t}$. 
		
		We now show that the zeroes of the extremal function must all be on the unit circle. Assume to the contrary that $re^{i t}$ is a zero of the extremal function $P$ and that $r \neq 1$, so that $(1/r) e^{i t}$ also is a zero of $P$. We will construct a polynomial $R$ of degree $d$ to show that $P$ cannot be extremal. To do so we let \[A(\theta)=(e^{i \theta}-re^{it})(e^{i \theta}-(1/r)e^{it}),\]
		and \[B(\theta)=(e^{i \theta}-1)^2.\]
		Assume first that $0<t<\pi$. Pick $\varepsilon>0$ so small that 
		\[|A(\theta)| > |A(\theta)-i\varepsilon B(\theta)|>0, \]
		for any $0<\theta<2 \pi$. To see that such an epsilon exists we first let \[m=\min_{\theta \in [0, 2\pi]} |A(e^{i\theta})|\] and observe that $m>0$ since $r \neq 1$. Using the reverse triangle inequality we see that for any $0<\varepsilon<m/4$ it follows that $|A(\theta)-i\varepsilon B(\theta)|>0$. Further, to show that 
		\begin{equation}\label{eq:eps2}
			|A(\theta)| > |A(\theta)-i\varepsilon B(\theta)|,
		\end{equation}
		we see that it suffices to show
		\[i(\overline{A(\theta)}B(\theta)-A(\theta)\overline{B(\theta)})-\varepsilon|B(\theta)|^2>0.\]  
		We see that
		\[\begin{split}
			&i(\overline{A(\theta)}B(\theta)-A(\theta)\overline{B(\theta)})-\varepsilon|B(\theta)|^2 \\ =&\frac{8 \sin(t)((1-r)^2+2r(1-\cos(\theta-t))) \sin^2(\theta/2)}{r}-\varepsilon16 \sin^4(\theta/2).
		\end{split}\]
		In particular, if we choose $\varepsilon<\sin(t)(1-r)^2/(2r)$ then \eqref{eq:eps2} holds. We now define the polynomial 
		\[R(z)=\frac{(z-re^{it})(z-(1/r)e^{it})-i\varepsilon (z-1)^2}{(z-re^{it})(z-(1/r)e^{it})} P(z),\] of degree $d$. Then $|R(z)|<|P(z)|$ for all $z \neq 1$ on the unit circle and $\|R\|_p < \|P\|_p$. It follows that $P$ cannot be an extremal function. This means that we have shown that if $re^{it}$ is a zero of the extremal function and $0<t<\pi$ then $r=1$. The arguments for the cases $t=\pi$ and $\pi<t<2\pi$ are similar. In the case $\pi<t<2\pi$ we can pick $\varepsilon>0$ so small that \[|A(\theta)| > |A(\theta)+i\varepsilon B(\theta)|>0, \]
		for any $0<\theta<2 \pi$ and adjust the definition of $R$ accordingly to show that $P$ cannot be extremal. For $t=\pi$, in fact for any $\pi/2 < t < 3\pi/2$, we can pick 
		$\varepsilon>0$ so small that \[|A(\theta)| > |A(\theta)+\varepsilon B(\theta)|>0, \]
		and again adjust the definition of $R$ accordingly to show that $P$ cannot be extremal. This means that all zeroes of the extremal function must have absolute value $1$. 
		
		Finally by considering the polynomial $S(z)=\overline{P(\overline{z})}$ it follows that if $P$ is extremal then so is $S$, and hence $P=S$. In particular, if $(z-e^{it_j})$ is a factor of $P$ then so is $(z-e^{-it_j})$. This implies that either $(z-e^{it_j})(z-e^{-it_j})$ is a factor of $P$ or $(z-e^{it_j})=(z-e^{-it_j})=(z+1)$ is  a factor of $P$. In particular, if $d$ is odd, $(z+1)$ must be a factor of $P$. The claimed structure follows.
	\end{proof}
	Note that when $d$ is even, then by a shift the problem \eqref{eq:extprob} corresponds to considering real trigonometric polynomials. Given the structure from Lemma \ref{lem:structure}, the case $d=1$ is easily solved. We see that $\varphi_{1,p}(z)=(1+z)/2$, and calculating the norm it follows that $\mathscr{C}_{1,p}=\pi/(B((p+1)/2, 1/2))$. Next we consider the classical power trick.
	
	\begin{lemma}\label{lem:PowerAlt2}
		Let $p \geq 1$ and let $n$ and $d$ be positive integers. It then follows that 
		\[\mathscr{C}_{d,np} \leq \mathscr{C}_{nd,p}.\] 
	\end{lemma}
	
	\begin{proof}
		Let $f$ be a polynomial of degree $d$ and let $g$ be a polynomial of degree $nd$ defined by $g(z)=(f(z))^n$. Then we have
		\[|f(1)|^{np}=|g(1)|^p \leq \mathscr{C}_{nd,p} \|g\|^p_{p}=\mathscr{C}_{nd,p} \|f\|^{np}_{np},\]
		which yields that $\mathscr{C}_{d,np} \leq \mathscr{C}_{nd,p}$.
	\end{proof}
	We observe that if we can show $\mathscr{C}_{d,p} \leq dp/2+1$ for $2 \leq p <4$ and all $d \geq 2$, then this bound can be extended to all $p \geq 2$ by Lemma \ref{lem:PowerAlt2}. Since this is currently out of reach we instead observe the following. 
	\begin{lemma}\label{lem:upper_power}
		If $2<p<\infty$ and $d \geq 1$ then $\mathscr{C}_{d,p} \leq d\lceil p/2\rceil+1$.
	\end{lemma}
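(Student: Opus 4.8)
The plan is to reduce the claim to the case of even integer exponents, where the conjectured bound is already accessible through the classical power trick, and then to recover the general exponent $p$ by a short interpolation argument that trades the $L^p$ norm against the sup norm. Throughout, set $n = \lceil p/2 \rceil$, so that $2n$ is an even integer with $2n \geq p$.

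First I would record the even-exponent bound. Applying Lemma \ref{lem:PowerAlt2} with the factorization $2n = n \cdot 2$ gives $\mathscr{C}_{d,2n} \leq \mathscr{C}_{nd,2}$, and since $\mathscr{C}_{m,2} = m+1$ for every $m$ (the Cauchy--Schwarz/Parseval computation recalled in the introduction), this yields $\mathscr{C}_{d,2n} \leq nd+1$. Concretely, every $P \in \mathscr{P}_d$ satisfies $\|P\|_\infty^{2n} \leq (nd+1)\|P\|_{2n}^{2n}$.

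Next I would interpolate. Fix $P \in \mathscr{P}_d$, which we may assume is not identically zero. Since $2n - p \geq 0$, we have the pointwise bound $|P(e^{i\theta})|^{2n-p} \leq \|P\|_\infty^{2n-p}$, and hence $\|P\|_{2n}^{2n} = \int_{-\pi}^{\pi} |P(e^{i\theta})|^{p}\,|P(e^{i\theta})|^{2n-p}\,\frac{d\theta}{2\pi} \leq \|P\|_\infty^{2n-p}\,\|P\|_p^p$. Combining this with the even-exponent bound gives $\|P\|_\infty^{2n} \leq (nd+1)\,\|P\|_\infty^{2n-p}\,\|P\|_p^p$, and dividing through by the positive finite quantity $\|P\|_\infty^{2n-p}$ leaves $\|P\|_\infty^{p} \leq (nd+1)\,\|P\|_p^p = \left(d\lceil p/2\rceil + 1\right)\|P\|_p^p$. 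As $P$ was arbitrary, $\mathscr{C}_{d,p} \leq d\lceil p/2\rceil + 1$.

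There is no substantial obstacle here: once the power trick of Lemma \ref{lem:PowerAlt2} and the value $\mathscr{C}_{m,2}=m+1$ are in hand, the argument is elementary. The only points that require care are the directions of the inequalities --- in particular that the sup norm is extracted to the nonnegative power $2n-p$ --- and the harmless normalization $P \not\equiv 0$ that makes the final division legitimate.
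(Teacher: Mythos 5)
Your proof is correct and follows essentially the same route as the paper: reduce to the even exponent $2\lceil p/2\rceil$ via Lemma \ref{lem:PowerAlt2} together with $\mathscr{C}_{m,2}=m+1$, then use the pointwise (H\"older-type) bound $\|P\|_{2n}^{2n}\leq \|P\|_\infty^{2n-p}\|P\|_p^p$ and divide by $\|P\|_\infty^{2n-p}$. The only differences are cosmetic --- you write $n=\lceil p/2\rceil$ where the paper writes $n+1$, and you spell out the nonvanishing normalization that justifies the final division.
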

	
	\begin{proof}
		Fix an integer $d$ and let $n$ be such that $2n<p \leq 2(n+1)$, and thus $\lceil p/2 \rceil=n+1$. By Lemma \ref{lem:PowerAlt2} it follows that $\mathscr{C}_{d,2(n+1)} \leq \mathscr{C}_{(n+1)d, 2} = (n+1)d+1$. Then using Hölder's inequality it follows that 
		\[\|P\|_\infty^{2(n+1)} \leq ((n+1)d+1)\|P\|_{2(n+1)}^{2(n+1)} \leq  ((n+1)d+1)\|P\|_{p}^{p} \|P\|_\infty^{2(n+1)-p}.\]
		That is $\|P\|_\infty^{p} \leq ((n+1)d+1)\|P\|_{p}^{p},$ which concludes the proof.
	\end{proof}
	
	\section{Extending the power trick}\label{sec:Genpower}
	In this section we extend the power trick by developing a representation formula, which will later be used to determine an upper bound for $\mathscr{C}_{d,p}$. This  representation formula is an analogue of \cite[Theorem 2.4]{Brevig}. We note that in the proof below we exploit the symmetry of the zeroes, leading to a slightly less general representation formula, but somewhat simpler proof than in \cite[Theorem 2.4]{Brevig}.
	
	\begin{theorem}\label{thm:24analogi}
		Let $P$ be a polynomial of degree $d \geq 1$ with $d$ zeroes on the unit circle. Let $k=\lfloor d/2 \rfloor$. Assume $P(e^{i \theta})=\overline{P(e^{-i\theta})}$. Denote the zeroes of $P$ by $e^{i t_1}$, $e^{i t_2}, \dots, e^{i t_{k}}$ and $e^{-i t_1}$, $e^{-i t_2}, \dots e^{-i t_{k}}$, where $0<t_1 \leq t_2 \leq \dots, \leq t_{k} \leq \pi$. If $d$ is odd then $e^{-i t_{k+1}}=-1$ is also a zero and $t_{k+1}=\pi$. For notation purposes we let $t_0=0$, and also for $d$ even we let $t_{k+1}=\pi$. Let $0<q<\infty$. Then 
		\[|P(1)|^q=\sum_{n=0}^{k} \int_{t_n}^{t_{n+1}}|P(e^{i \theta})|^q \frac{\sin((\frac{d}{2}q+\frac{1}{2}) \theta-\pi q n)}{\sin\frac{\theta}{2}} \, \frac{d \theta}{\pi}.\]
	\end{theorem}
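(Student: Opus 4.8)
The plan is to recognise the piecewise kernel as the boundary trace of a single holomorphic power of $P$, and then to evaluate the resulting integral by Abel summation. First I would normalise. The symmetry $P(e^{i\theta})=\overline{P(e^{-i\theta})}$ forces $P(1)$ to be real, and the hypothesis $t_1>0$ gives $P(1)\neq 0$, so after possibly replacing $P$ by $-P$ (which changes neither side) I may assume $P(1)>0$. The same symmetry shows that $P$ has real coefficients; since all its zeroes lie on the circle, $P$ has no zeroes in $[0,1)$, and hence $P(r)>0$ for every $r\in[0,1]$. Writing the factorisation of $P$ on the circle and using $(e^{i\theta}-e^{\pm i t_j})=e^{i\theta/2}\cdot 2i\sin(\tfrac{\theta\mp t_j}{2})$, a short sign count over the factors with $t_j>\theta$ shows that on the interval $(t_n,t_{n+1})$ one has $P(e^{i\theta})=e^{i(\frac{d}{2}\theta-\pi n)}\,|P(e^{i\theta})|$. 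This pins down the continuous branch of the argument of $P$ along the circle.

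Next I would introduce $w(z)=P(z)^q$, the branch of the $q$-th power that is holomorphic in the open disc (where $P$ is zero-free) and normalised by $w(1)=P(1)^q>0$; it is bounded, hence lies in $H^\infty$, so its boundary values $W(\theta):=w(e^{i\theta})$ have vanishing negative Fourier coefficients. By the previous paragraph $W(\theta)=e^{iq(\frac{d}{2}\theta-\pi n)}|P(e^{i\theta})|^q$ on $(t_n,t_{n+1})$, whence $e^{i\theta/2}W(\theta)=e^{i((\frac{dq}{2}+\frac12)\theta-\pi q n)}|P(e^{i\theta})|^q$. Taking imaginary parts and dividing by $\sin(\theta/2)=\mim(e^{i\theta/2})$ turns the integrand into the branch-free expression
\[
|P(e^{i\theta})|^q\,\frac{\sin((\tfrac{dq}{2}+\tfrac12)\theta-\pi q n)}{\sin(\theta/2)}=\mre W(\theta)+\cot(\tfrac{\theta}{2})\,\mim W(\theta).
\]
Because $W(-\theta)=\overline{W(\theta)}$, this right-hand side is even in $\theta$, so summing over $n$ collapses the right-hand side of the theorem to $\tfrac{1}{2\pi}\int_{-\pi}^{\pi}\big(\mre W+\cot(\tfrac{\theta}{2})\mim W\big)\,d\theta$.

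Finally I would evaluate this integral by Abel summation. For $r<1$ the Taylor series $w(re^{i\theta})=\sum_{m\ge 0}c_m r^m e^{im\theta}$ converges absolutely, and term-by-term integration using the elementary identities $\tfrac{1}{2\pi}\int_{-\pi}^{\pi}\cot(\tfrac{\theta}{2})\sin(m\theta)\,d\theta=1$ for $m\ge 1$ and $\tfrac{1}{2\pi}\int_{-\pi}^{\pi}\cot(\tfrac{\theta}{2})\cos(m\theta)\,d\theta=0$ gives
\[
\frac{1}{2\pi}\int_{-\pi}^{\pi}\big(\mre w(re^{i\theta})+\cot(\tfrac{\theta}{2})\mim w(re^{i\theta})\big)\,d\theta=\sum_{m\ge 0}\mre(c_m)\,r^m=\mre\,w(r)=P(r)^q,
\]
where the last step uses $P(r)>0$. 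Letting $r\to 1^-$ and invoking the continuity of $w$ on the closed disc yields $P(1)^q=|P(1)|^q$, which is the claim.

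The main obstacle is the bookkeeping at the boundary zeroes: one must check that the naive continuous phase $\frac{d}{2}\theta-\pi n$ really is the branch inherited from the interior holomorphic $w$, and one must control the passage $r\to 1^-$ through the singularity of $\cot(\theta/2)$ at $\theta=0$. The latter is harmless because $P(1)\neq 0$ makes $w$ smooth near $z=1$, so $\mim W(\theta)=O(\theta)$ there and the integrand stays bounded; moreover the cosine (i.e.\ $\mim c_m$) contributions, which individually are only principal values, cancel in aggregate since $\mim w(r)=0$.
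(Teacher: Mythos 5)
Your route is genuinely different from the paper's: the paper proves the identity by applying Cauchy's formula to a branch $f_q$ of $P^q$ defined on a split plane, integrated against the kernel $1/(z-1)$ over a contour $\Gamma_\varepsilon$ that hugs the circle from inside except near $z=1$, where it passes outside; you instead treat the interior branch $w=P^q$ as a bounded holomorphic (in fact disc-algebra) function and evaluate $\tfrac{1}{2\pi}\int_{-\pi}^{\pi}\bigl(\mre W+\cot(\tfrac{\theta}{2})\mim W\bigr)\,d\theta$ by Abel summation. Your second half is correct and arguably more elementary than the contour argument: the algebraic identity $\mre W+\cot(\tfrac{\theta}{2})\mim W=|P(e^{i\theta})|^q\sin\bigl((\tfrac{dq}{2}+\tfrac12)\theta-\pi qn\bigr)/\sin(\tfrac{\theta}{2})$, the evenness reduction using $W(-\theta)=\overline{W(\theta)}$, the integrals of $\cot(\tfrac{\theta}{2})\sin(m\theta)$, and the passage $r\to1^-$ (harmless since $P(1)\neq0$ makes $\mim w(re^{i\theta})=O(|\theta|)$ uniformly near $z=1$) all check out.

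However, there is a genuine gap, and you name it yourself without closing it: the step ``$W(\theta)=e^{iq(\frac d2\theta-\pi n)}|P(e^{i\theta})|^q$ on $(t_n,t_{n+1})$'' does not follow from your first paragraph. The computation on the circle determines $\arg P(e^{i\theta})$ only modulo $2\pi$, and because $q$ is not an integer this ambiguity survives the passage to $P^q$: if the argument inherited from the interior branch were $\frac d2\theta-\pi n+2\pi m_n$, or $\frac d2\theta+\pi n$, you would pick up factors $e^{2\pi iqm_n}$, or the kernel $\sin((\tfrac{dq}2+\tfrac12)\theta+\pi qn)$, and the resulting identity would be different and false. Thus the whole content of the theorem --- in particular the sign of $\pi qn$ --- sits exactly in the claim you defer with ``one must check''. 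What must be proved is that the boundary argument of the interior branch equals $\frac d2\theta$ on $(0,t_1)$ (base case, from $w$ being positive on $[0,1]$ and analytic near $z=1$) and \emph{drops} by exactly $\pi$ times the multiplicity as $\theta$ crosses each $t_j$ from below; this follows, for instance, from the local expansion $re^{i\theta}-e^{it_j}\approx e^{it_j}\bigl(-(1-r)+ir(\theta-t_j)\bigr)$, whose argument decreases continuously through $\pi$ as $\theta$ passes $t_j$ with $r<1$. This is precisely what the paper's split-plane construction encodes: its one-sided limits \eqref{eq:1516} amount to the statement that each factor $((z-e^{it_j})(z-e^{-it_j}))^q$ acquires $e^{-i\pi q}$, not $e^{+i\pi q}$, when the zero is passed from inside the circle. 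Until you supply this branch-tracking lemma, the proof is incomplete at its crux. (Two cosmetic points: your factorisation $(e^{i\theta}-e^{\pm it_j})=e^{i\theta/2}\cdot 2i\sin(\tfrac{\theta\mp t_j}{2})$ omits the factor $e^{\pm it_j/2}$, and the sign of the product over pairs is governed by the number of $t_j<\theta$, namely $n$; also, the vanishing of negative Fourier coefficients of $W$ is never actually used --- what your argument needs is continuity of $w$ on the closed disc, which holds because $|w|=|P|^q$ vanishes continuously at the boundary zeroes.)
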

	
	\begin{proof}
		By rescaling we assume $P(1)=1$. First, let $d$ be even. Then 
		\[P(z)=C\prod_{j=1}^{k}(z-e^{it_j})(z-e^{-it_j}),\] where $C>0$ is a normalizing constant. We fix a number $q>0$ and define $f_q(z)$ on  $\mathbb{C}\setminus \{e^{i x} : t_1/2 \leq x \leq 2\pi-t_1/2\}$ as the function
		\[f_q(z)=C^q\prod_{j=1}^{k}((z-e^{it_j})(z-e^{-it_j}))^q,\]
		and observe that $f_q(1)=1$.
		
		Let $\varepsilon$ and $\delta$ be two small positive numbers. Define $G_{\varepsilon, \delta}$ to be the closed curve oriented counter-clockwise and connecting the points 
		\[ e^{i \delta}, \quad e^{i t_1/2}, \quad (1-\varepsilon) e^{i t_1/2}, \quad (1-\varepsilon) e^{-i t_1/2} \quad e^{-i t_1/2}, \quad e^{-i \delta}, \] where, when traversing from $e^{-i \delta}$ to $e^{i \delta}$ the contour follows the circle arc centred at $1$ and passing trough $e^{-i \delta}$ and $e^{i \delta}$. We denote this arc by $C_\delta$, and denote the remaining part $G_{\varepsilon, \delta} \setminus C_\delta$ by $\Gamma_{ \varepsilon, \delta}$. See Figure \ref{Fig:thm24Analogi} for an illustration. We observe that $G_{\varepsilon, \delta}$ is a closed simple curve and that the function $z \mapsto f_q(z)/(z-1)$ is analytic on $G_{\varepsilon, \delta}$ as well as on its interior. Consequently it follows that 
		\[\frac{1}{2 \pi i } \int_{\Gamma_{\varepsilon, \delta}} \frac{f_q(z)}{z-1} \, dz = - \frac{1}{2 \pi i } \int_{C_\delta} \frac{f_q(z)}{z-1} \, dz . \]
		We note that  $C_\delta$ is an arc of the circle $|z-1|=2 \sin(\delta/2)$ of angle $\pi-\delta$. From the Fractional Residue Theorem, see e.g \cite[page 209]{Gamelin} it follows that 
		\[\lim_{\delta \to 0} \frac{1}{2 \pi i} \int_{C_\delta} \frac{f_q(z)}{z-1} \, dz = -\frac{1}{2} \Res[f_q(z)/(z-1), 1]=-\frac{1}{2}.\]
		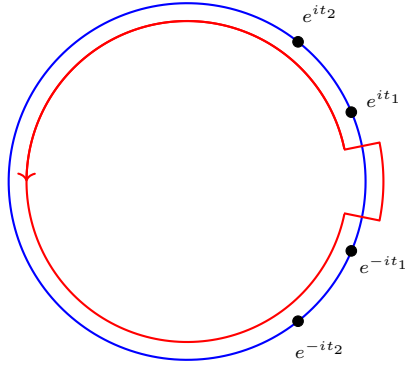
\begin{figure}
			\begin{tikzpicture}
				\begin{axis}[
					axis equal image,    
					axis lines=none,     
					ticks=none,             
					point meta min=0,
					point meta max=1,
					]
					\addplot[
					domain=0:2*pi,
					samples=200,
					thin,
					blue,
					]
					({cos(deg(x))}, {sin(deg(x))});
					
					\addplot[
					domain=0.203:2*pi-0.203,
					samples=200,
					thin,
					red,
					]
					({0.8*cos(deg(x))}, {0.8*sin(deg(x))});
					
					\addplot[
					domain=0.203:pi,
					samples=200,
					thin,
					red,
					->
					]
					({0.8*cos(deg(x))}, {0.8*sin(deg(x))});
					
					\addplot[
					domain=0.1:0.203,
					samples=200,
					thin,
					red
					]
					({1*cos(deg(x))}, {1*sin(deg(x))});

					\addplot[
					domain=-0.203:-0.1,
					samples=200,
					thin,
					red
					]
					({1*cos(deg(x))}, {1*sin(deg(x))});
					
					\addplot[
					domain=0.797:1,
					samples=200,
					thin,
					red
					]
					({x*cos(deg(0.2))}, {x*sin(deg(0.2))});
					
					\addplot[
					domain=0.797:1,
					samples=200,
					thin,
					red
					]
					({x*cos(deg(-0.2))}, {x*sin(deg(-0.2))});

					\addplot[
					domain=pi+0.01:3/2*pi-0.01,
					samples=200,
					thin,
					red,
					<-
					]
					({1+0.1*cos(deg(x))}, {0.1*sin(deg(x))});
					
					\addplot[
					domain=pi/2+0.01:3/2*pi-0.01,
					samples=200,
					thin,
					red,
					]
					({1+0.1*cos(deg(x))}, {0.1*sin(deg(x))});

					\addplot[
					domain=-0.203:-0.1,
					samples=200,
					thin,
					white
					]
					({1.1*cos(deg(x))}, {1.1*sin(deg(x))});
					
					\addplot[only marks,mark=*,color=black,mark size=2pt] coordinates {(0.92, 0.389) (0.92, -0.389) (0.6216, 0.7833) (0.6216, -0.7833) (1,0)};
					\node at (1.105, 0.467) {$\scriptstyle e^{i t_1} $};
					\node at (1.105, -0.467) {$\scriptstyle e^{-i t_1} $};
					\node at (0.7459, 0.94) {$\scriptstyle e^{i t_2} $};
					\node at (0.7459, -0.94) {$\scriptstyle e^{-i t_2} $};
					\node at (1.1, 0) {$\scriptstyle 1 $};
				\end{axis}
			\end{tikzpicture}
			\caption{The curve \CR{$G_{\varepsilon, \delta}$}  in the proof of Theorem \ref{thm:24analogi} and the \CB{unit} \CB{circle}. The $d$ zeroes of the polynomial are on the unit circle. }
			\label{Fig:thm24Analogi}
		\end{figure}
		In particular this means that
		\[	\lim_{ \delta \to 0} \frac{1}{2 \pi i } \int_{\Gamma_{\varepsilon, \delta}} \frac{f_q(z)}{z-1} \, dz =\frac{1}{2}.\]
		Next we see that
		\[\frac{1}{2 \pi i } \int_{\Gamma_{\varepsilon, \delta}} \frac{f_q(z)}{z-1} \, dz \\ = I_{1, \delta}+ I_{2, \varepsilon} + I_{3, \varepsilon},\]
		where 
		\[I_{1, \delta}=\frac{1}{2 \pi} \int_{\delta}^{t_1/2}\left( \frac{f_q(e^{i \theta})}{e^{i \theta}-1}e^{i \theta}+ \frac{f_q(e^{-i \theta})}{e^{-i \theta}-1}e^{-i \theta} \right) d\theta,\]
		
		\[I_{2, \varepsilon}=\frac{1}{2 \pi i } \int_{0}^{\varepsilon} \left(\frac{f_q((1-r)e^{-i t_1/2})}{(1-r)e^{-i t_1/2}-1} e^{-i t_1/2} - \frac{f_q((1-r)e^{i t_1/2})}{(1-r)e^{i t_1/2}-1} e^{i t_1/2} \right) dr\]
		and
		\[I_{3, \varepsilon}=\frac{1}{2\pi}  \int_{t_1/2}^{\pi}\left( \frac{f_q((1-\varepsilon)e^{i \theta})}{(1-\varepsilon)e^{i\theta}-1}(1-\varepsilon)e^{i \theta} +\frac{f_q((1-\varepsilon)e^{-i \theta})}{(1-\varepsilon)e^{-i\theta}-1}(1-\varepsilon)e^{-i \theta} \right) d\theta.\]
		We start with considering $I_{1, \delta}$. We observe that for $0<\theta<t_1/2$ we have that $f_{q}(e^{\pm i \theta}) = |P(e^{i \theta})|^q e^{ \pm i \theta q d/2}$. Consequently it follows that 
		\[\begin{split}
			\lim_{ \delta \to 0} I_{1, \delta} &= \lim_{ \delta \to 0} \frac{1}{2 \pi}
			\int_{\delta}^{t_1/2} \left(\frac{|P(e^{i \theta})|^q e^{i \theta q d/2}}{2 i \sin \theta/2}e^{i \theta /2}- \frac{|P(e^{i \theta})|^q e^{-i \theta q d/2}}{2 i \sin \theta/2}e^{-i \theta /2} \right) d\theta \\ &=\lim_{ \delta \to 0} \frac{1}{2 \pi} \int_{\delta}^{t_1/2} \frac{|P(e^{i \theta})|^q \sin{((dq/2+1)\theta)}}{\sin \theta/2} \, d\theta \\
			&=\frac{1}{2 \pi} \int_{0}^{t_1/2} \frac{|P(e^{i \theta})|^q \sin{((dq/2+1)\theta)}}{\sin \theta/2 } \, d\theta.
		\end{split}\]
		This means that for every small $\varepsilon>0$ we have
		\begin{equation}\label{eq:rep1}
			\frac{1}{2 \pi} \int_{0}^{t_1/2} \frac{|P(e^{i \theta})|^q \sin{((dq/2+1)\theta)}}{\sin \theta/2} \, d\theta+I_{2, \varepsilon}+I_{3, \varepsilon}=\frac{1}{2}.
		\end{equation}
		We now let $\varepsilon$ tend to $0$ to obtain the desired representation formula from \eqref{eq:rep1}. We can discard $I_{2, \varepsilon}$ since
		\begin{equation}\label{eq:eps0}
			\lim_{\varepsilon \to 0} \frac{1}{2 \pi i } \int_{0}^{\varepsilon} \left(\frac{f_q((1-r)e^{-i t_1/2})}{(1-r)e^{-i t_1/2}-1} e^{-i t_1/2} - \frac{f_q((1-r)e^{i t_1/2})}{(1-r)e^{i t_1/2}-1} e^{i t_1/2} \right) dr=0.
		\end{equation}
		Considering $I_{3, \varepsilon}$, we define $f_{q, \pm}(e^{ \pm i \theta})=\lim_{\varepsilon \to 0} f_q((1-\varepsilon)e^{\pm i \theta})$ for $t_1/2 \leq \theta \leq \pi$. We observe that
		\[(e^{i \theta}-e^{it_n})(e^{i \theta}-e^{-it_n})=2e^{i \theta}(\cos \theta -\cos t_n).\]
		Using this observation we see that for $t_n<\theta<t_{n+1}$ we have
		\begin{equation}\label{eq:1516}
			f_{q, \pm}(e^{\pm i \theta}) = |P(e^{i \theta})|^q e^{ \pm i \theta q d/2} e^{\mp i \pi q n}.
		\end{equation}
		Letting $\varepsilon$ tend to $0$, using \eqref{eq:1516} and continuity it follows that
		\begin{equation}\label{eq:1534}
			\begin{split}
				&\frac{1}{2\pi} \int_{t_n}^{t_{n+1}} \left( \frac{f_q((1-\varepsilon)e^{i \theta})}{(1-\varepsilon)e^{i\theta}-1}(1-\varepsilon)e^{i \theta} +\frac{f_q((1-\varepsilon)e^{-i \theta})}{(1-\varepsilon)e^{-i\theta}-1}(1-\varepsilon)e^{-i \theta} \right) d\theta \\
				\to &\frac{1}{2\pi} \int_{t_n}^{t_{n+1}} \left(\frac{|P(e^{i\theta})|^q e^{i \theta q d/2} e^{-i \pi q n}e^{i\theta}}{e^{i \theta}-1} +\frac{|P(e^{i\theta})|^q e^{-i \theta q d/2} e^{i \pi q n}e^{-i\theta}}{e^{-i \theta}-1} \right) d\theta\\
				=&\frac{1}{2\pi} \int_{t_n}^{t_{n+1}} |P(e^{i \theta})|^q\frac{\sin \left(\frac{dq+1}{2}\theta-\pi q n\right)}{\sin \frac{\theta}{2}}\, d\theta
		\end{split} \end{equation}
		for $n \geq 1$, and for $n=0$ with $t_1/2$ and $t_1$ as the lower and upper bounds of integration. 
		Consequently, combining \eqref{eq:rep1}, \eqref{eq:eps0} and \eqref{eq:1534} concludes the proof for $d$ even. 
		
		Now assume $d$ is odd. Then \[P(z)=C(z+1)\prod_{j=1}^{k}(z-e^{it_j})(z-e^{-it_j}).\]
		We observe that $1+e^{ix}=\sqrt{2(1+\cos x)}e^{ix/2}$. Thus in this case we define
		\[f_q(z)=C^q(z+1)^q\prod_{j=1}^{k}((z-e^{it_j})(z-e^{-it_j}))^q,\]
		and $f_{q, \pm}(e^{ \pm i \theta})=\lim_{\varepsilon \to 0} f_q((1-\varepsilon)e^{\pm i \theta})$ for $t_1/2 \leq \theta \leq \pi$. Thus also for $d$ odd and $t_n<\theta< t_{n+1}$ we have
		\[f_{q, \pm}(e^{\pm i \theta}) = |P(e^{i \theta})|^q e^{ \pm i \theta q d/2} e^{\mp i \pi q n},\] and we can repeat the proof given above. 
	\end{proof}
	Our goal is to use Theorem \ref{thm:24analogi} to derive the upper bound on $\mathscr{C}_{d,p}$ stated in Theorem \ref{thm:main}. It is clear from the representation formula in Theorem \ref{thm:24analogi} that this requires a certain knowledge on the zeroes of the extremal function. Section \ref{sec:tn} is devoted to establishing this information. 
	
	\section{The zeroes of the extremal function}\label{sec:tn}
	In this section we extract information about the arguments of the zeroes $t_n$ of the extremal function $\varphi_{d, p}$.
	We begin by stating a result of Turán \cite[Theorem 3]{Turan} which yields a lower bound for the argument $t_1$ of the first zero.
	\begin{lemma}[Turán]\label{lem:Turan}
		Let $1 <p < \infty$, and let $t_1$ denote the smallest argument of a zero of the polynomial $\varphi_{d, p}$ of degree $d \geq 1$. Then
		\[t_1 \geq \frac{\pi}{d}.\]
	\end{lemma}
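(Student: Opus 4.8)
The plan is to deduce the bound from Turán's theorem by verifying that $\varphi_{d,p}$ satisfies its hypotheses; the single fact that must be extracted beforehand is that $\varphi_{d,p}$ attains its maximum modulus on $\mathbb{T}$ exactly at the evaluation point $z=1$. First I would record this max-modulus property. Normalising so that $\varphi_{d,p}(1)=1$, the defining relation \eqref{eq:extprob} gives $\|\varphi_{d,p}\|_p^p=1/\mathscr{C}_{d,p}$. Combining the trivial estimate $1=|\varphi_{d,p}(1)|^p\leq\|\varphi_{d,p}\|_\infty^p$ with the definition of $\mathscr{C}_{d,p}$, namely $\|\varphi_{d,p}\|_\infty^p\leq\mathscr{C}_{d,p}\|\varphi_{d,p}\|_p^p=1$, forces equality throughout, so that $\|\varphi_{d,p}\|_\infty=|\varphi_{d,p}(1)|=1$. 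Hence the maximum of $|\varphi_{d,p}|$ over $\mathbb{T}$ is attained at $z=1$.

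Next I would invoke Turán \cite[Theorem 3]{Turan} in the form: a polynomial of degree $d$ whose modulus on $\mathbb{T}$ attains its maximum at a point $z_0\in\mathbb{T}$ has no zero within angular distance $\pi/d$ of $z_0$. Applying this with $z_0=1$, and recalling from Lemma \ref{lem:structure} that the zeros of $\varphi_{d,p}$ are the points $e^{\pm it_j}$ (together with $-1$ when $d$ is odd) with $t_1$ the smallest argument, the value $t_1$ cannot lie in the open interval $(0,\pi/d)$, which is precisely $t_1\geq\pi/d$. The bound is sharp: the polynomial $(1+z^d)/2$ has maximum modulus $1$ attained at $z=1$, and its zero of smallest argument sits exactly at $e^{i\pi/d}$, so in this reduction the only genuine work is the max-modulus observation above, after which the conclusion is immediate.

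Should one prefer a self-contained argument rather than citing \cite{Turan}, I would pass to $\psi(\theta)=|\varphi_{d,p}(e^{i\theta})|^2$. By the symmetry $\varphi_{d,p}(e^{i\theta})=\overline{\varphi_{d,p}(e^{-i\theta})}$ from Lemma \ref{lem:structure}, this is a nonnegative cosine polynomial of degree $d$ with $\psi(0)=\max_\theta\psi(\theta)$, so $\psi(\theta)=S(\cos\theta)$ for an algebraic polynomial $S$ of degree $d$ that is nonnegative on $[-1,1]$ with $S(1)=\max_{[-1,1]}S$; the claim becomes that the largest zero of $S$ in $[-1,1)$ does not exceed $\cos(\pi/d)$. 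I would establish this by comparing $S$ with the Chebyshev-based extremal polynomial $S(1)\,(1+\mathcal{T}_d)/2$, where $\mathcal{T}_d$ is the Chebyshev polynomial of degree $d$, and counting sign changes at the equioscillation nodes $\cos(j\pi/d)$ for $j=0,\dots,d$. The delicate step in that route, and the expected main obstacle, is making the sign-change bookkeeping rigorous against the degree bound while accounting for the double zeros of $\psi$; this is exactly the content of Turán's theorem, which is why citing it is the cleanest path once the max-modulus property is in hand.
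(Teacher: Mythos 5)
Your proposal takes essentially the same approach as the paper: the paper offers no proof of this lemma at all, stating it purely as a citation of Tur\'an's Theorem 3, and your argument likewise reduces to that citation. The one step you add---verifying via the equality chain $1=|\varphi_{d,p}(1)|^p\leq\|\varphi_{d,p}\|_\infty^p\leq\mathscr{C}_{d,p}\|\varphi_{d,p}\|_p^p=1$ that the maximum modulus of $\varphi_{d,p}$ on $\mathbb{T}$ is attained at $z=1$---is correct and supplies exactly the hypothesis needed to apply the max-modulus form of Tur\'an's theorem (the same hypothesis appearing in Lemma \ref{lem:Cavllius}), a point the paper leaves implicit.
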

	We also have the following simple lemma which is key in proving $\mathscr{C}_{4,p} \leq 2p+1$.
	
	\begin{lemma}\label{lem:d4pi2}
		Let $d=4$ and assume $p > 1$. Let $\varphi_{d,p}$ be the extremal function and denote its zeroes by $e^{it_1}, e^{i t_2}, e^{-it_1}$ and $e^{-it_2}$, where $0 <t_1 \leq t_2 \leq \pi$. Then $t_2 \geq \pi /2$. 
	\end{lemma}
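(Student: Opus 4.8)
The plan is to exploit the reflection $z\mapsto -z$ of the unit circle together with the optimality of the extremal function $\varphi=\varphi_{4,p}$, rather than to attempt any direct analysis of the defining integral. By Lemma~\ref{lem:structure} (with $d=4$, so $k=2$) we may write $\varphi(z)=C(z-e^{it_1})(z-e^{-it_1})(z-e^{it_2})(z-e^{-it_2})$ with $C$ real, and I would first record the values at $\pm 1$. Using $(1-e^{it})(1-e^{-it})=2(1-\cos t)$ and $(1+e^{it})(1+e^{-it})=2(1+\cos t)$, and writing $c_j:=\cos t_j$, one gets $\varphi(1)=4C(1-c_1)(1-c_2)$ and $\varphi(-1)=4C(1+c_1)(1+c_2)$. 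We may assume $t_2<\pi$ (otherwise $t_2=\pi\ge\pi/2$ and there is nothing to prove), so $t_1\le t_2<\pi$, hence $c_j\in(-1,1)$ and $\varphi(-1)\neq 0$; after normalizing $\varphi(1)=1$ this yields the clean formula $\varphi(-1)=\dfrac{(1+c_1)(1+c_2)}{(1-c_1)(1-c_2)}>0$.

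The key step is the choice of competitor. Set $\psi(z)=\varphi(-z)/\varphi(-1)$. Then $\psi\in\mathscr{P}_4$ and $\psi(1)=\varphi(-1)/\varphi(-1)=1$, so $\psi$ is admissible in the extremal problem \eqref{eq:extprob}. Since the substitution $\theta\mapsto\theta+\pi$ preserves the normalized arc-length measure on $\mathbb{T}$, the map $z\mapsto -z$ is an $L^p$ isometry, so $\|\varphi(-\,\cdot\,)\|_p=\|\varphi\|_p$ and therefore $\|\psi\|_p=\|\varphi\|_p/|\varphi(-1)|$. Because $\varphi$ minimizes the $L^p$ norm among all polynomials taking the value $1$ at $z=1$, we get $\|\varphi\|_p\le\|\psi\|_p=\|\varphi\|_p/|\varphi(-1)|$, and dividing by $\|\varphi\|_p>0$ gives $|\varphi(-1)|\le 1$.

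It then remains to translate this into a constraint on the zeros. From $\varphi(-1)\le 1$ we obtain $(1+c_1)(1+c_2)\le(1-c_1)(1-c_2)$; expanding, the constant and $c_1c_2$ terms cancel and the inequality collapses to $c_1+c_2\le 0$. Finally, $t_1\le t_2$ forces $c_2\le c_1$, so $2c_2\le c_1+c_2\le 0$, i.e. $\cos t_2\le 0$, which is precisely $t_2\ge\pi/2$.

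I do not expect a serious obstacle here; the whole content is in recognizing that the reflected-and-renormalized polynomial $\psi$ is admissible and that optimality then pins down $|\varphi(-1)|\le 1$. The only points requiring a word of care are the degenerate case $\varphi(-1)=0$ (which can only happen when $t_2=\pi$, where the claim is immediate) and the $L^p$-invariance under $z\mapsto -z$. It is worth remarking that this argument uses $d=4$ essentially — it is exactly the case $k=2$ that makes $(1+c_1)(1+c_2)\le(1-c_1)(1-c_2)$ equivalent to the linear inequality $c_1+c_2\le 0$, after which the ordering of the $t_j$ closes the argument — while the hypothesis $2\le p\le 4$ plays no role beyond the standing assumption $1<p<\infty$ that guarantees the structure in Lemma~\ref{lem:structure}.
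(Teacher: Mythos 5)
Your proof is correct, and at its core it uses the same competitor as the paper: your $\psi(z)=\varphi(-z)/\varphi(-1)$ has zeroes $e^{\pm i(\pi-t_1)}$, $e^{\pm i(\pi-t_2)}$ and satisfies $\psi(1)=1$, so it is exactly the polynomial $Q$ that the paper constructs by reflecting the arguments $\tau_j \mapsto \pi-\tau_j$. The difference lies in how the norm comparison is carried out. The paper argues by contradiction: assuming $t_1 \le t_2 < \pi/2$, it expands $\|Q\|_p^p$ and $\|P\|_p^p$ as integrals over $[0,\pi]$, applies the substitution $\theta \mapsto \pi-\theta$, and compares the integrands pointwise using $0<1-\cos\tau_j<1+\cos\tau_j$. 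You instead observe that $z\mapsto -z$ is an $L^p(\mathbb{T})$ isometry, so that $\|\psi\|_p=\|\varphi\|_p/|\varphi(-1)|$ holds exactly, whence minimality of $\varphi$ forces $|\varphi(-1)|\le 1$; then (writing $c_j=\cos t_j$) the inequality $(1+c_1)(1+c_2)\le(1-c_1)(1-c_2)$ collapses to $c_1+c_2\le 0$, and the ordering $c_2\le c_1$ finishes. This buys several things: no contradiction hypothesis, no integral manipulation, the quantitatively stronger conclusion $\cos t_1+\cos t_2\le 0$, and an immediate generalization --- for any even $d$ the same argument gives $\prod_j(1+\cos t_j)\le \prod_j(1-\cos t_j)$, which forces at least one $\cos t_j\le 0$, and this is precisely the extension the paper asserts without proof in the remark following the lemma. (So your closing comment that $d=4$ is essential applies only to the collapse to the linear inequality $c_1+c_2\le 0$, not to the lemma's conclusion itself.) You are also right that, exactly as in the paper's proof, only $1<p<\infty$ is used and the hypothesis $2\le p\le 4$ plays no role.
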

	
	\begin{proof}
	Let $P(z)$ be a polynomial of degree $4$ such that $P(1)=1$ and let the zeroes of $P$ be given by $e^{i \tau_1}$, $e^{-i \tau_1}$, $e^{i \tau_2}$ and $e^{-i \tau_2}$, where $0<\tau_1 \leq \tau_2 <\pi/2$. Then 
		\[\begin{split}
			P(e^{i \theta})&= \frac{(e^{i \theta}-e^{i \tau_1})(e^{i \theta}-e^{-i \tau_1})(e^{i \theta}-e^{i \tau_2})(e^{i \theta}-e^{-i \tau_2})}{(1-e^{i \tau_1})(1-e^{-i \tau_1})(1-e^{i \tau_2})(1-e^{-i \tau_2})}\\&=e^{2 i \theta} \frac{(\cos \theta-\cos \tau_1)(\cos \theta - \cos \tau_2)}{(1-\cos \tau_1)(1-\cos \tau_2)}.
		\end{split}
		\]
		Next we let $\gamma_1=\pi-\tau_1$ and $\gamma_2=\pi- \tau_2$ and $Q$ be the polynomial given by
		\[\begin{split}
			Q(e^{i \theta})&= \frac{(e^{i \theta}-e^{i \gamma_1})(e^{i \theta}-e^{-i \gamma_1})(e^{i \theta}-e^{i \gamma_2})(e^{i \theta}-e^{-i \gamma_2})}{(1-e^{i \gamma_1})(1-e^{-i \gamma_1})(1-e^{i \gamma_2})(1-e^{-i \gamma_2})}\\&=e^{2 i \theta} \frac{(\cos \theta-\cos \gamma_1)(\cos \theta - \cos \gamma_2)}{(1-\cos \gamma_1)(1-\cos \gamma_2)}.
		\end{split}
		\]
		We observe that $Q$ is of degree $4$ and $Q(1)=1$. From \eqref{eq:extprob} we see that the proof is complete  if we can show that $\|Q\|_p^p \leq \|P\|_p^p$, since this shows that $P$ cannot be the extremal function.
		Calculating the norm and using the substitution $u=\pi-\theta$ we see that
		\[\begin{split}
			\|Q\|_p^p&= \int_{-\pi}^{\pi} |Q(e^{i \theta})|^p\, \frac {d\theta}{2 \pi}=\frac{1}{\pi} \int_{0}^{\pi}\left|\frac{(\cos \theta-\cos \gamma_1)(\cos \theta - \cos \gamma_2)}{(1-\cos \gamma_1)(1-\cos \gamma_2)}\right|^p \, d\theta\\&=
			\frac{1}{\pi} \int_{0}^{\pi}\left|\frac{(\cos \theta+\cos \tau_1)(\cos \theta + \cos \tau_2)}{(1+\cos \tau_1)(1+\cos \tau_2)}\right|^p \, d\theta\\&=\frac{1}{\pi} \int_{0}^{\pi}\left|\frac{(\cos \theta-\cos \tau_1)(\cos \theta - \cos \tau_2)}{(1+\cos \tau_1)(1+\cos \tau_2)}\right|^p \, d\theta \\ &\leq 
			\frac{1}{\pi} \int_{0}^{\pi}\left|\frac{(\cos \theta-\cos \tau_1)(\cos \theta - \cos \tau_2)}{(1-\cos \tau_1)(1-\cos \tau_2)}\right|^p \, d\theta \\
			&=\|P\|_p^p.
		\end{split}\]
		Note that the inequality follows since $0 < \tau_1 \leq \tau_2 <\pi/2$ guarantees that $\cos \tau_1$ and $\cos \tau_2$ both take values in $(0,1)$.
	\end{proof}
	
	It is easy to see that we can generalize Lemma \ref{lem:d4pi2} to say that if $d$ is even, then at least one zero has an argument greater than or equal to $\pi/2$. 
	\section{Proof of Theorem \ref{thm:main}}\label{sec:mainproof}
	Before embarking on the proof of Theorem \ref{thm:main}, we will introduce some necessary terminology. Given an integer $d \geq 2$ we let $k=\lfloor d/2 \rfloor$ and $T_d$ be the set of non-decreasing sequences with $k+2$ elements such that $\tau_0=0$ and $\tau_{k+1}=\pi$. Moreover, we let $t=(t_n)_{n=0}^{k+1}$ be the sequence of arguments in $(0, \pi]$ of the zeroes of the extremal function $\varphi_{d, p}$, extended with the element $t_0=0$, as well as $t_{k+1}= \pi$ if $d$ is even. It is clear that $t$ belongs to $T_d$.
	
	Given an integer $d \geq 2$ and a sequence $\tau$ in $T_d$ we define the function
	\[K_{d,p}(\tau; \theta)=\sum_{n=0}^{k} \chi_{(\tau_n,\tau_{n+1})}(\theta) \frac{\sin ((\frac{dp}{4}+\frac{1}{2})\theta-\frac{p}{2} \pi n)}{\sin\frac{\theta}{2}},\]
	and let $M_{d,p}(\tau; \theta)=\max(K_{d,p}(\tau; \theta),0)$. Further we define 
	\[E_{d,p}(\tau)=\frac{1}{\pi} \int_{0}^{\pi} M_{d,p}^2(\tau; \theta) \, d \theta.\]
	
	We now establish the following result, which will be key in proving Theorem \ref{thm:main}. Note that this can be viewed as the trigonometric analogue of \cite[Corollary 6.1]{Brevig}. 
	\begin{theorem}\label{thm:cor61analogi}
		Let $2 \leq p < \infty$, let $d \geq 2$ be an integer and define $k=\lfloor d/2 \rfloor$. Let $\Lambda$ be a subset of $T_d$ containing the sequence $t=(t_n)_{n=0}^k$. 
		Then
		\[\mathscr{C}_{d,p} \leq \sup_{\tau \in \Lambda} E_{d,p}(\tau).\]
	\end{theorem}
	
	\begin{proof}
		Applying Theorem \ref{thm:24analogi} with $P(z)=\varphi_{d,p}(z)$ and $q=p/2$, we get
		\[1=|\varphi_{d,p}(1)|^{p/2}=\frac{1}{\pi} \int_{0}^{\pi}|\varphi_{d,p}(e^{i \theta})|^{p/2} K_{d,p}(t; \theta) \, d \theta.\]
		Clearly it also holds that 
		\begin{equation}\label{eq:61eq}
			1 \leq \frac{1}{\pi} \int_{0}^{\pi}|\varphi_{d,p}(e^{i \theta})|^{p/2} M_{d,p}(t; \theta) \, d \theta.
		\end{equation}
		Squaring both sides of \eqref{eq:61eq} and using the Cauchy--Schwarz inequality we obtain
		\[1 \leq \|\varphi_{d,p}\|_p^p \frac{1}{\pi} \int_{0}^{\pi} M_{d,p}^2(t; \theta) \, d \theta. \]
		Thus by the definition of $\mathscr{C}_{d,p}$ we conclude that 
		\[\mathscr{C}_{d,p} \leq \frac{1}{\pi} \int_{0}^{\pi} M_{d,p}^2(t; \theta) \, d \theta \leq \sup_{\tau \in \Lambda} \frac{1}{\pi} \int_{0}^{\pi} M_{d,p}^2(\tau; \theta) \, d \theta. \qedhere\]
	\end{proof}
	Note that in the proof of Theorem \ref{thm:cor61analogi} we replaced $K_{d,p}(\tau; \theta)$ by its positive part, and we also applied the Cauchy--Schwarz inequality. It follows that the upper bound for $\mathscr{C}_{d,p}$ obtained with this technique cannot be optimal for $p \neq 2$.
	
	Using Theorem \ref{thm:cor61analogi}, we will prove Theorem \ref{thm:main} by determining the supremum in
	\begin{equation}\label{eq:1546}
		\mathscr{C}_{d,p} \leq \sup_{\tau \in \Lambda} E_{d,p}(\tau),
	\end{equation} where $\Lambda \subseteq T_d$ is a set of sequences of possible arguments of the zeroes of the extremal function $\varphi_{d,p}$, for $2 \leq d \leq 4$ and $ 2 \leq p \leq 4$. The argument follows the lines of what was done in \cite{Instanes}. In particular, due to Lemma \ref{lem:Turan}, for any integer $d$ we can restrict $\Lambda$ to the set $\Lambda_d$, where we define
	\begin{equation}\label{eq:Lambdad}
		\Lambda_d=\{(\tau_n)_{n=0}^{k+1}: 0=\tau_0 \leq \pi/d \leq \tau_1 \leq \tau_2 \leq \dots \leq \tau_{k+1} = \pi \}. 
	\end{equation}
	In the special case $d=4$ we use Lemma \ref{lem:d4pi2} to restrict $\Lambda$ further and redefine $\Lambda_4$ as
	\begin{equation}\label{eq:Lambda4}
		\Lambda_4=\{\tau=(\tau_n)_{n=0}^3: \tau_0=0, \, \frac{\pi}{d} \leq \tau_1 \text{ and } \frac{\pi}{2} \leq \tau_2 \leq \tau_3 =\pi\}.
	\end{equation}
	
	To determine the supremum in \eqref{eq:1546} we introduce terminology similar to that in \cite{Brevig} and \cite{Instanes}. We refer to each connected component of the set
	\[\left\{\theta: \sin \left(\left(\frac{dp}{4}+\frac{1}{2}\right) \theta - \frac{p}{2} \pi n \right)>0 \text{ and } 0 \leq \theta \leq \pi \right\}\]
	as an \emph{interval at level $n$}. We note that each interval at level $n$ has length $l= 4\pi/(dp+2)$, and that the left endpoint of an interval at level $n$ is always of the form $l(pn+4j)/2$ for some integer $j$. Furthermore, it is easy to see that when $2<p<4$, an interval at level $n$ intersects exactly one interval at level $n+1$. Thus one can think of determining the supremum in \eqref{eq:1546} as finding the correct "jumps" between intervals at level $n$ and level $n+1$ in order to maximize the corresponding integral $E_{d,p}(\tau)$. See Figure \ref{fig:d3p103} for an illustration. Below we state some lemmas that allow us to exclude certain sequences from $\Lambda_d$. In particular we start with a lemma saying that we need not consider sequences where $\tau_n \geq lpn/2$. We recall that $lpn/2$ is a left endpoint of an interval at level $n$, and that $l=4\pi/(dp+2)$.
	\begin{figure}
		\centering
		\begin{tikzpicture}[scale=1.5]
			\begin{axis}[
				axis equal image,
				axis lines = none,
				trig format plots=rad]
				
				% levels
				
				% n=0			
				\addplot[thin, name path=t0] coordinates {(0,0) (pi/3,0)};
				\addplot[domain=0:pi/3, samples=100, color=black!75, thin, name path=b0] ({x},{1/5*sin(3*x))*sin(3*x)});
				\addplot[red!50] fill between [of=b0 and t0];	
				
				\addplot[thin] coordinates {(2*pi/3,0) (pi,0)};
				\addplot[domain=2*pi/3:pi, samples=100, color=black!75, thin] ({x},{1/5*sin(3*x))*sin(3*x)});
				
				% n=1
				\addplot[thin] coordinates {(0,-0.4) (2/9*pi,-0.4)};
				\addplot[domain=0:2/9*pi, samples=100, color=black!75, thin] ({x},{1/5*sin(3*x-5/3*pi))*sin(3*x-5/3*pi)-0.4});
				
				\addplot[thin, name path=t1] coordinates {(5/9*pi,-0.4) (8/9*pi,-0.4)};
				\addplot[domain=5/9*pi:8/9*pi, samples=100, color=black!75, thin, name path=b1] ({x},{1/5*sin(3*x-5/3*pi))*sin(3*x-5/3*pi)-0.4});
				\addplot[red!50] fill between [of=b1 and t1];

				%lambda
				\addplot[thin,color=blue,->] coordinates {(pi/3,0) (pi/3,-0.4)};
				
				%jukselinje for å fikse x-akse
				\addplot[thin, color=black!0] coordinates {(0,-0.55) (3.5,-0.55)};
				%x-axis
				\addplot[only marks,mark=|,color=black,mark size=2pt] coordinates {(2*pi/9,-0.4) (pi/3,-0.4) (5*pi/9,-0.4) (pi,-0.4)};
				\node at (axis cs: 2*pi/9,-0.5) {$\scriptstyle \frac{2}{9}\pi$};
				\node at (axis cs: pi/3,-0.5) {$\scriptstyle \frac{1}{3}\pi$};
				\node at (axis cs: 5*pi/9,-0.5) {$\scriptstyle  \frac{5}{9}\pi$};
				\node at (axis cs: pi,-0.5) {$\scriptstyle \pi$};
			\end{axis}
		\end{tikzpicture}
		\caption{An illustration of the intervals at level $0$ and level $1$ for $d=3$ and $p=10/3$. Theorem \ref{thm:uppergood} tells us that the supremum in equation \eqref{eq:1546} is attained whenever $\pi/3 \leq \tau_1 \leq 5\pi/9$. The shaded area represents $E_p(\tau)$ without considering the integrand factor $1/\sin^2(\theta/2)$. }
		\label{fig:d3p103}

		\centering
		\begin{tikzpicture}[scale=1.5]
			\begin{axis}[
				axis equal image,
				axis lines = none,
				trig format plots=rad]
				
				% levels
				
				% n=0			
				\addplot[thin] coordinates {(0,0) (pi/3,0)};
				\addplot[domain=0:pi/3, samples=100, color=black!75, thin] ({x},{1/5*sin(3*x))*sin(3*x)});
				
				\addplot[thin, name path=t0] coordinates {(0,0) (3*pi/18,0)};
				\addplot[domain=0:3*pi/18, samples=100, color=black!75, thin, name path=b0] ({x},{1/5*sin(3*x))*sin(3*x)});
				\addplot[red!50] fill between [of=b0 and t0];	
				
				\addplot[thin] coordinates {(2*pi/3,0) (pi,0)};
				\addplot[domain=2*pi/3:pi, samples=100, color=black!75, thin] ({x},{1/5*sin(3*x))*sin(3*x)});
				
				% n=1
				\addplot[thin] coordinates {(0,-0.4) (2/9*pi,-0.4)};
				\addplot[domain=0:2/9*pi, samples=100, color=black!75, thin] ({x},{1/5*sin(3*x-5/3*pi))*sin(3*x-5/3*pi)-0.4});
				
				\addplot[thin, name path=t11] coordinates {(3/18*pi,-0.4) 	(2/9*pi,-0.4)};
				\addplot[domain=3/18*pi:2/9*pi, samples=100, color=black!75, thin, name path=b11] ({x},{1/5*sin(3*x-5/3*pi))*sin(3*x-5/3*pi)-0.4});
				\addplot[red!50] fill between [of=b11 and t11];	
				
				\addplot[thin, name path=t1] coordinates {(5/9*pi,-0.4) 	(8/9*pi,-0.4)};
				\addplot[domain=5/9*pi:8/9*pi, samples=100, color=black!75, thin, name path=b1] ({x},{1/5*sin(3*x-5/3*pi))*sin(3*x-5/3*pi)-0.4});
				\addplot[red!50] fill between [of=b1 and t1];	
				
				%lambda
				\addplot[thin,color=blue,->] coordinates {(3*pi/18,0) (3*pi/18,-0.4)};
				
				%jukselinje for å fikse x-akse
				\addplot[thin, color=black!0] coordinates {(0,-0.55) (3.5,-0.55)};
				%x-axis
				\addplot[only marks,mark=|,color=black,mark size=2pt] coordinates {(pi/9,-0.4) (2*pi/9,-0.4) (pi/3,-0.4) (5*pi/9,-0.4) (pi,-0.4)};
				\node at (axis cs: pi/9,-0.5) {$\scriptstyle \frac{1}{9}\pi$};
				\node at (axis cs: 2*pi/9,-0.5) {$\scriptstyle \frac{2}{9}\pi$};
				\node at (axis cs: pi/3,-0.5) {$\scriptstyle \frac{1}{3}\pi$};
				\node at (axis cs: 5*pi/9,-0.5) {$\scriptstyle  \frac{5}{9}\pi$};
				\node at (axis cs: pi,-0.5) {$\scriptstyle \pi$};
			\end{axis}
		\end{tikzpicture}
		\caption{The shaded area represents $E_p(\gamma)$ for $\gamma_1=3\pi/18$ without considering the integrand factor $1/\sin^2(\theta/2)$ for $d=3$ and $p=10/3$. Keep in mind that $\pi/9$ is the midpoint between the first interval at level $0$ and the first interval at level $1$. Lemma \ref{lem:mid} tells us that for any $\gamma_1$ in $[\pi/9, \pi/3]$ it will follow that $E_p(\gamma) \leq E_p(\tau)$ where $\tau_1= \pi/3$ as seen in Figure \ref{fig:d3p103}. }
		\label{fig:d3p103_part2}
	\end{figure}
	
	\begin{lemma}\label{lem:tleq}
		Let $2 \leq p \leq 4$ and $d\geq 2$ be an integer. Let $\gamma$ be a sequence in $\Lambda_d$ and let $\tau$ be the sequence $\tau_n=\min(lpn/2, \gamma_n)$. Then $\tau$ is in $\Lambda_d$ as well and $E_p(\gamma) \leq E_p(\tau)$.  
	\end{lemma}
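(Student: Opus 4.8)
\emph{Plan of proof.} The plan is to reduce the inequality to a single-coordinate move and then to an integral estimate governed by a reflection symmetry together with the monotonicity of the weight $1/\sin^2(\theta/2)$. First I would check $\tau\in\Lambda_d$. Since $n\mapsto lpn/2$ is increasing and $\gamma$ is non-decreasing, so is their pointwise minimum; one has $\tau_0=0$ and a direct computation gives $lp(k+1)/2\geq\pi$ when $p\geq 2$, so $\tau_{k+1}=\pi$; finally the lower bounds defining $\Lambda_d$ (namely $\tau_1\geq\pi/d$, and $\tau_2\geq\pi/2$ when $d=4$) survive because the cut values $lpn/2$ already exceed them. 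I would then lower the coordinates one at a time in increasing order $n=1,\dots,k$, replacing $\gamma_n$ by $\min(lpn/2,\gamma_n)$ and keeping the others fixed. Each intermediate sequence stays in $\Lambda_d$: its already-cut left neighbour is at most $lp(n-1)/2<lpn/2$, and its right neighbour is $\gamma_{n+1}\geq\gamma_n\geq lpn/2$, so monotonicity is preserved. Thus it suffices to show that one such step does not decrease $E_{d,p}$.

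A single lowering of $\tau_n$ from $\gamma_n$ to $lpn/2$ alters $K_{d,p}$ only on $(lpn/2,\gamma_n)$, where the selected level switches from $n-1$ to $n$. Writing $\omega=(dp+2)/4=\pi/l$, $w(\theta)=1/\sin^2(\theta/2)$ and $g_m(\theta)=w(\theta)\max(\sin(\omega\theta-\tfrac{p}{2}\pi m),0)^2$, the induced change in $E_{d,p}$ equals $\tfrac1\pi\int_{lpn/2}^{\gamma_n}(g_n-g_{n-1})\,d\theta$, so the lemma reduces to proving this integral is non-negative. Introducing the level-$n$ phase $\psi=\omega\theta-\tfrac{p}{2}\pi n$ (so $\psi=0$ precisely at $\theta=lpn/2$) and setting $h(x)=\max(\sin x,0)^2$, $c=p\pi/2$ and $D(\psi)=h(\psi)-h(\psi+c)$, the level-$(n-1)$ numerator is $h(\psi+c)$, and the target becomes $\int_{lpn/2}^{\gamma_n} w\,D\,d\theta\geq 0$, with $w>0$ and decreasing on $(0,\pi)$.

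Next I would record the structure of $D$. It is $2\pi$-periodic, and since $\sin(3\pi-c-\psi)=\sin(c+\psi)$ one gets the reflection identity $h(2\psi_*-\psi)=h(\psi+c)$ with $\psi_*=(6-p)\pi/4$, whence $D(2\psi_*-\psi)=-D(\psi)$; that is, $D$ is anti-symmetric about $\psi_*$ and about each $\psi_*+\pi k$. Using $\sin^2A-\sin^2B=\sin(A+B)\sin(A-B)$ one checks that inside one period $D\geq 0$ on the ``positive hump'' $(0,\psi_*)$, $D\leq 0$ on the mirror ``negative hump'' $(\psi_*,2\psi_*)$, and $D\equiv 0$ on the remaining flat piece; crucially the cut point $\theta=lpn/2$ sits at the very start $\psi=0$ of a positive hump. (The endpoints $p=2$ and $p=4$ are degenerate but harmless: at $p=4$ one has $c=2\pi$ and $D\equiv 0$.)

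Finally, because $w$ is decreasing, integrating $w\,D$ over any window symmetric about a centre $\psi_*+\pi k$ is non-negative: pairing $\theta$ with its reflection and invoking the anti-symmetry of $D$, the larger weight always multiplies the non-negative value at the smaller argument. Decomposing $(lpn/2,\gamma_n)$ into full positive–negative blocks (each $\geq 0$ by this estimate) and flat blocks (contributing $0$), the only delicate point is a truncated final block, which is where I expect the main work to lie. If it terminates inside a positive hump or a flat piece its contribution is manifestly $\geq 0$; if it terminates inside a negative hump, I would bound the (partial, lightly weighted) negative contribution by the full negative hump, which is in turn dominated by the preceding full positive hump via the same symmetric-window inequality. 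Summing the blocks yields $\int w\,D\,d\theta\geq 0$, which completes the step and hence the lemma.
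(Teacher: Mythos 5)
Your proposal is correct. At the top level it follows the same strategy as the paper --- verify that $\tau$ lies in $\Lambda_d$ by comparing the cut values $lpn/2$ with the defining lower bounds, reduce to lowering one coordinate at a time, and drive the resulting integral inequality with the monotonicity of the weight $1/\sin^2(\theta/2)$ --- but the mechanism behind the key inequality is genuinely different. The paper lowers $\gamma_m$ in two stages: first to $l(pm+4j)/2$, the largest left endpoint of a level-$m$ interval below $\gamma_m$, by a pointwise estimate plus weight monotonicity, and then down to $lpm/2$ by periodicity; in substance this is a \emph{translation} argument, exploiting that the level-$(m-1)$ profile $h(\psi+p\pi/2)$ is a rightward translate (by $2\pi-p\pi/2$) of the level-$m$ profile $h(\psi)$, so that under a decreasing weight the switch of levels can only gain. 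You instead perform the whole move at once and control the sign of $D(\psi)=h(\psi)-h(\psi+p\pi/2)$ through its \emph{reflection} anti-symmetry about $\psi_*=(6-p)\pi/4$, decomposing the switched region into positive--negative hump pairs, flat pieces, and one truncated final block that you dominate by its full block. Both routes rest on the same structural facts (the periodic overlap of the humps and the decreasing weight), but yours makes explicit several points the paper leaves implicit: the exact sign pattern of $D$, the degenerate endpoints $p=2$ and $p=4$, and --- important for rigour --- that the coordinates must be cut in increasing order of $n$, so that every intermediate sequence stays monotone and the switched region really is $(lpn/2,\gamma_n)$. One small overstatement to fix: the symmetric-window inequality does not hold for \emph{arbitrary} windows centred at $\psi_*+\pi k$ (a wide enough window reaches the neighbouring period's negative hump on the left, where the paired value $D(z-u)$ is negative and the pairing estimate reverses), but you only ever invoke it with half-width $\psi_*\leq\pi$, where the left half of the window meets only the positive hump, so the proof is unaffected.
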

	
	\begin{proof}
		We see that $\tau$ is an increasing sequence and since $lp/2 \geq \pi/d$ it follows that $\tau_1 \geq \pi/d$. Thus if $d \neq 4$, then $\tau$ is in $\Lambda_d$. Further for $d=4$ we see that $pl \geq \pi/2$ so $\tau$ is in $\Lambda_d$ also if $d=4$. If $\gamma_n=\tau_n$ for all $n$ we are done. Thus we fix some integer $m$ such that $\gamma_m > \tau_m$. Let $j$ be the largest integer such that $l(pm+4j)/2 \leq \gamma_m$. (That is $l(pm+4j)/2$ is the largest left endpoint of an interval at level $m$ smaller than $\gamma_m$.) By a pointwise estimate and using that $\sin(\theta/2)$ is an increasing function for $0<\theta < \pi$ it follows that replacing $\gamma_m$ with $l(pm+4j)/2$ will increase the value of $E_p(\gamma)$. Further by periodicity and using that $\sin(\theta/2)$ is an increasing function it follows that replacing $\gamma_m$ with $mpl/2$ will increase the value of $E_{d,p}(\gamma)$ further. Since this holds for all $m$ such that $\gamma_m > \tau_m$ we conclude that $E_p(\gamma) \leq E_p(\tau)$. 
	\end{proof}
	
	In particular we note that the above lemma allows us to impose the condition $\tau_n \leq npl/2$ for all $n$ on the set $\Lambda$ in the supremum \eqref{eq:1546}. Next we state a lemma that essentially tells us that the supremum \eqref{eq:1546} is not attained if we make a "jump" from level $n$ to level $n+1$ between the right endpoint of an interval at level $n$ and the midpoint between the interval at level $n$ and its intersecting interval at level $n+1$. This is illustrated in Figure \ref{fig:d3p103} and Figure \ref{fig:d3p103_part2}. 
	
	\begin{lemma}\label{lem:mid}
		Let $2 \leq p \leq 4$ and $d\geq 2$ be an integer. Let $\gamma$ be a sequence in $\Lambda_d$ such that $\gamma_n \leq nlp/2$. Fix an integer $r$. Assume $M \leq \gamma_{r+1} \leq b$, where $(a,b)$ is an interval at level $r$ and $M$ is the midpoint between $(a,b)$ and the intersecting interval $(c,d)$ at level $r+1$. Let $\tau$ be the sequence in $T_d$ defined by $\tau_{r+1}=b$ and $\tau_n=\gamma_n$ for all $n \neq r+1$. Then $E_p(\gamma) \leq E_p(\tau)$. If $d \leq 4$ then it also holds that $\tau$ is in $\Lambda_d$. 
	\end{lemma}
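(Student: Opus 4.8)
My plan is to compare the two functionals directly. Write $A=\tfrac{dp}{4}+\tfrac12$, so that $l=\pi/A$, and for $0\le n\le k$ set
\[g_n(\theta)=\left[\frac{\sin\!\left(A\theta-\tfrac p2\pi n\right)}{\sin(\theta/2)}\right]_+^2,\]
where $[\,\cdot\,]_+$ denotes the positive part. Since the intervals $(\tau_n,\tau_{n+1})$ are disjoint and exhaust $(0,\pi)$ up to endpoints, and $\sin(\theta/2)>0$ there, we have $M_{d,p}^2(\tau;\theta)=g_n(\theta)$ on $(\tau_n,\tau_{n+1})$, whence $E_p(\tau)=\tfrac1\pi\sum_{n=0}^k\int_{\tau_n}^{\tau_{n+1}}g_n$, with the analogous formula for $\gamma$. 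As $\tau$ and $\gamma$ agree except for the single entry $\tau_{r+1}=b\ge\gamma_{r+1}$, only the summands $n=r$ and $n=r+1$ change, and the bookkeeping collapses to
\[E_p(\tau)-E_p(\gamma)=\frac1\pi\int_{\gamma_{r+1}}^{b}\bigl(g_r(\theta)-g_{r+1}(\theta)\bigr)\,d\theta.\]
So it suffices to show $g_r\ge g_{r+1}$ on $[\gamma_{r+1},b]$, and because $\gamma_{r+1}\ge M$ it is enough to prove $g_r\ge g_{r+1}$ on $[M,b]$.

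Next I would nail down the geometry of the two relevant intervals. For $2<p<4$ the level-$(r+1)$ intervals are the level-$r$ intervals translated by $\tfrac{lp}2\in(l,2l)$, so the level-$r$ interval $(a,b)=(a,a+l)$ meets exactly one level-$(r+1)$ interval, whose right endpoint $e:=a+\tfrac{lp}2-l$ lies in $(a,b)$; the overlap is $(a,e)$. On their own intervals the two sines $\sin(A\theta-\tfrac p2\pi r)$ and $\sin(A\theta-\tfrac p2\pi(r+1))$ are congruent arches, symmetric about their centers, so they cross exactly once in $(a,e)$, at the average of the two centers; a short computation identifies this crossing with $M=a+\tfrac{l(p-2)}4$, in agreement with the midpoint in the statement. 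For $\theta\in[M,e]$ both arches are nonnegative with the level-$r$ arch the larger, so $g_r\ge g_{r+1}$; for $\theta\in[e,b]$ the level-$(r+1)$ sine is nonpositive, so $g_{r+1}=0\le g_r$. Hence $g_r\ge g_{r+1}$ on all of $[M,b]$, giving $E_p(\gamma)\le E_p(\tau)$. The endpoints $p=2$ (where $M=a$) and $p=4$ (where the two sines coincide) are degenerate and follow by the same computation or by continuity.

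It remains to check that $\tau\in\Lambda_d$ when $d\le4$. The inequalities $\tau_r=\gamma_r\le\gamma_{r+1}\le b$ and $\tau_1\ge\pi/d$ are immediate, so the only real issue is the right-hand monotonicity $b\le\gamma_{r+2}=\tau_{r+2}$. Here I would invoke the standing hypothesis $\gamma_{r+1}\le(r+1)\tfrac{lp}2$. Since the level-$r$ left endpoints are the numbers $\tfrac{lpr}2+2lj$, spaced $2l>\tfrac{lp}2$ apart, this bound forces the left endpoint of the interval carrying $\gamma_{r+1}$ to equal $\tfrac{lpr}2$: the choices with $j\le-1$ would make $b$ smaller than $\gamma_{r+1}$ (indeed negative when $r=0$, and below $\pi/2\le\gamma_2$ when $d=4,\ r=1$), contradicting $\gamma_{r+1}\le b$. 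Thus $b=\tfrac{lpr}2+l=\pi\,\tfrac{2pr+4}{dp+2}$.

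Finally, for $d\le4$ only two configurations occur. If $r+2=k+1$ then $\tau_{r+2}=\pi$, and $b\le\pi$ amounts to $p(d-2r)\ge2$, which holds because $d-2r\in\{2,3\}$ and $p\ge2$. If instead $d=4$ and $r=0$, then $\tau_{r+2}=\tau_2\ge\pi/2$ by the definition of $\Lambda_4$, whereas $b=l=\tfrac{2\pi}{2p+1}\le\tfrac{2\pi}5<\tfrac\pi2$. In either case $b\le\tau_{r+2}$, so $\tau\in\Lambda_d$. The technical heart of the argument is the single-crossing comparison $g_r\ge g_{r+1}$ on $[M,b]$, together with the identification of the crossing point with $M$; the restriction $d\le4$ enters only in the membership claim, since for larger $d$ the intermediate constraint $b\le\gamma_{r+2}$ is not controlled by the hypotheses at hand.
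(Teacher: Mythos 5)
Your proposal is correct and takes essentially the same route as the paper: both reduce the comparison of $E_p(\gamma)$ and $E_p(\tau)$ to a pointwise inequality between the level-$r$ and level-$(r+1)$ (positive-part) squared sine ratios on $[M,b]$, and then verify membership in $\Lambda_d$ by the same case distinction ($d=2,3$ versus $d=4$ with $r=0$ or $r=1$). The only difference is one of detail: you actually prove the pointwise estimate (via the arch symmetry and the single crossing at the midpoint of the two centers) and the monotonicity $b\le\tau_{r+2}$, both of which the paper asserts without elaboration.
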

	
	\begin{proof}
		We need only show
		\[\int_{M}^{b} \left(\frac{\sin \frac{dp+2}{4}\theta-\frac{p}{2}\pi r}{\sin \frac{\theta}{2}}\right)^2 \, d\theta \geq \int_{M}^{d} \left(\frac{\sin \frac{dp+2}{4}\theta-\frac{p}{2}\pi (r+1)}{\sin \frac{\theta}{2}} \right)^2 d\theta,\]
		which follows from the pointwise estimate 
		\[\left(\frac{\sin \frac{dp+2}{4}\theta-\frac{p}{2}\pi r}{\sin \frac{\theta}{2}}\right)^2 \geq \left(\frac{\sin \frac{dp+2}{4}\theta-\frac{p}{2}\pi (r+1)}{\sin \frac{\theta}{2}} \right)^2\] for all $M \leq \theta \leq d$. It remains to show that $\tau$ is in $\Lambda_d$ whenever $d \leq 4$. If $d=2$ or $d=3$ then $0=\tau_0 \leq \tau_1 \leq \tau_2=\pi$ and clearly $\tau_1 \geq \pi/d$ since $\gamma_1 \geq \pi/d$, so $\tau$ is in $\Lambda_d$. Next consider $d=4$. Then $\pi/d \leq \tau_1 \leq pl/2 \leq \pi/2 \leq \tau_2$, and so $\tau$ is in $\Lambda_d$. 
	\end{proof}
	
	\begin{figure}
		\centering
		\begin{tikzpicture}[scale=1.5]
			\begin{axis}[
				axis equal image,
				axis lines = none,
				trig format plots=rad]
				
				% levels
				|\node at (axis cs: -0.2,0) {$\scriptstyle 0$};
				\node at (axis cs: -0.2,-0.4) {$\scriptstyle 1$};
				\node at (axis cs: -0.2,-0.8) {$\scriptstyle 2$};
				
				% n=0			
				\addplot[thin, name path=t0] coordinates {(0,0) (pi/4,0)};
				\addplot[domain=0:pi/4, samples=100, color=black!75, thin, name path=b0] ({x},{1/5*sin(4*x))*sin(4*x)});
				\addplot[red!50] fill between [of=b0 and t0];	
				
				\addplot[thin] coordinates {(pi/2,0) (3/4*pi,0)};
				\addplot[domain=pi/2:3*pi/4, samples=100, color=black!75, thin] ({x},{1/5*sin(4*x))*sin(4*x)});
				
				% n=1
				\addplot[thin] coordinates {(0,-0.4) (3/16*pi,-0.4)};
				\addplot[domain=0:3/16*pi, samples=100, color=black!75, thin] ({x},{1/5*sin(4*x-7/4*pi))*sin(4*x-7/4*pi)-0.4});
				
				\addplot[thin, name path=t1] coordinates {(7/16*pi,-0.4) (11/16*pi,-0.4)};
				\addplot[domain=7/16*pi:11/16*pi, samples=100, color=black!75, thin, name path=b1] ({x},{1/5*sin(4*x-7/4*pi))*sin(4*x-7/4*pi)-0.4});
				\addplot[red!50] fill between [of=b1 and t1];	
				
				\addplot[thin] coordinates {(15/16*pi,-0.4) (pi,-0.4)};
				\addplot[domain=15/16*pi:pi, samples=100, color=black!75, thin] ({x},{1/5*sin(4*x-7/4*pi))*sin(4*x-7/4*pi)-0.4});
				
				% n=2
				\addplot[thin] coordinates {(0,-0.8) (2/16*pi,-0.8)};
				\addplot[domain=0:2/16*pi, samples=100, color=black!75, thin] ({x},{1/5*sin(4*x-7/2*pi))*sin(4*x-7/2*pi)-0.8});
				
				\addplot[thin] coordinates {(6/16*pi,-0.8) (10/16*pi,-0.8)};
				\addplot[domain=6/16*pi:10/16*pi, samples=100, color=black!75, thin] ({x},{1/5*sin(4*x-7/2*pi))*sin(4*x-7/2*pi)-0.8});
				
				\addplot[thin, name path=t2] coordinates {(14/16*pi,-0.8) (pi,-0.8)};
				\addplot[domain=14/16*pi:pi, samples=100, color=black!75, thin, name path=b2] ({x},{1/5*sin(4*x-7/2*pi))*sin(4*x-7/2*pi)-0.8});
				\addplot[red!50] fill between [of=b2 and t2];	
				
				%lambda
				\addplot[thin,color=blue,->] coordinates {(pi/4,0) (pi/4,-0.4)};
				\addplot[thin,color=blue,->] coordinates {(11*pi/16,-0.4) (11*pi/16,-0.8)};
				
				%jukselinje for å fikse x-akse
				\addplot[thin, color=black!0] coordinates {(0,-1) (3.5,-1.1)};
				%x-axis
				\addplot[only marks,mark=|,color=black,mark size=2pt] coordinates {(pi/4,-0.8) (pi/2,-0.8) (3/4*pi,-0.8) (pi,-0.8)};
				\node at (axis cs: pi/4,-1) {$\scriptstyle \pi/4$};
				\node at (axis cs: pi/2,-1) {$\scriptstyle \pi/2$};
				\node at (axis cs: 3*pi/4,-1) {$\scriptstyle 3\pi/4$};
				\node at (axis cs: pi,-1) {$\scriptstyle \pi$};
			\end{axis}
		\end{tikzpicture}
		\caption{Together with Figure \ref{fig:d4_cand2} this gives an illustration of Lemma \ref{lem:d4p34} for $d=4$ and $p=3.5$. Here we see the sequence $\tau$ from Lemma \ref{lem:d4p34}. The shaded area represents $E_p(\tau)$ without considering the integrand factor $1/\sin^2(\theta/2)$. The lemma tells us that $E_p(\gamma) \leq E_p(\tau)$, where $\gamma$ is illustrated in Figure \ref{fig:d4_cand2}.}
		\label{fig:d4_cand1}
		\centering
		\begin{tikzpicture}[scale=1.5]
			\begin{axis}[
				axis equal image,
				axis lines = none,
				trig format plots=rad]
				
				% levels
				\node at (axis cs: -0.2,0) {$\scriptstyle 0$};
				\node at (axis cs: -0.2,-0.4) {$\scriptstyle 1$};
				\node at (axis cs: -0.2,-0.8) {$\scriptstyle 2$};
				
				% n=0			
				\addplot[thin, name path=t0] coordinates {(0,0) (pi/4,0)};
				\addplot[domain=0:pi/4, samples=100, color=black!75, thin, name path=b0] ({x},{1/5*sin(4*x))*sin(4*x)});
				\addplot[red!50] fill between [of=b0 and t0];	
				
				\addplot[thin] coordinates {(pi/2,0) (3/4*pi,0)};
				\addplot[domain=pi/2:3*pi/4, samples=100, color=black!75, thin] ({x},{1/5*sin(4*x))*sin(4*x)});
				
				% n=1
				\addplot[thin] coordinates {(0,-0.4) (3/16*pi,-0.4)};
				\addplot[domain=0:3/16*pi, samples=100, color=black!75, thin] ({x},{1/5*sin(4*x-7/4*pi))*sin(4*x-7/4*pi)-0.4});
				
				\addplot[thin] coordinates {(7/16*pi,-0.4) (11/16*pi,-0.4)};
				\addplot[domain=7/16*pi:11/16*pi, samples=100, color=black!75, thin] ({x},{1/5*sin(4*x-7/4*pi))*sin(4*x-7/4*pi)-0.4});
				
				\addplot[thin, name path=t1] coordinates {(7/16*pi,-0.4) (8/16*pi,-0.4)};
				\addplot[domain=7/16*pi:8/16*pi, samples=100, color=black!75, thin, name path=b1] ({x},{1/5*sin(4*x-7/4*pi))*sin(4*x-7/4*pi)-0.4});
				\addplot[red!50] fill between [of=b1 and t1];	
				
				\addplot[thin] coordinates {(15/16*pi,-0.4) (pi,-0.4)};
				\addplot[domain=15/16*pi:pi, samples=100, color=black!75, thin] ({x},{1/5*sin(4*x-7/4*pi))*sin(4*x-7/4*pi)-0.4});
				
				% n=2
				\addplot[thin] coordinates {(0,-0.8) (2/16*pi,-0.8)};
				\addplot[domain=0:2/16*pi, samples=100, color=black!75, thin] ({x},{1/5*sin(4*x-7/2*pi))*sin(4*x-7/2*pi)-0.8});
				
				\addplot[thin] coordinates {(6/16*pi,-0.8) (10/16*pi,-0.8)};
				\addplot[domain=6/16*pi:10/16*pi, samples=100, color=black!75, thin] ({x},{1/5*sin(4*x-7/2*pi))*sin(4*x-7/2*pi)-0.8});
				
				\addplot[thin, name path=t22] coordinates {(8/16*pi,-0.8) (10/16*pi,-0.8)};
				\addplot[domain=8/16*pi:10/16*pi, samples=100, color=black!75, thin, name path=b22] ({x},{1/5*sin(4*x-7/2*pi))*sin(4*x-7/2*pi)-0.8});
				\addplot[red!50] fill between [of=b22 and t22];	
				
				\addplot[thin, name path=t2] coordinates {(14/16*pi,-0.8) (pi,-0.8)};
				\addplot[domain=14/16*pi:pi, samples=100, color=black!75, thin, name path=b2] ({x},{1/5*sin(4*x-7/2*pi))*sin(4*x-7/2*pi)-0.8});
				\addplot[red!50] fill between [of=b2 and t2];	
				
				%lambda
				\addplot[thin,color=blue,->] coordinates {(pi/4,0) (pi/4,-0.4)};
				\addplot[thin,color=blue,->] coordinates {(pi/2,-0.4) (pi/2,-0.8)};
				
				%jukselinje for å fikse x-akse
				\addplot[thin, color=black!0] coordinates {(0,-1) (3.5,-1.1)};
				%x-axis
				\addplot[only marks,mark=|,color=black,mark size=2pt] coordinates {(pi/4,-0.8) (pi/2,-0.8) (3/4*pi,-0.8) (pi,-0.8)};
				\node at (axis cs: pi/4,-1) {$\scriptstyle \pi/4$};
				\node at (axis cs: pi/2,-1) {$\scriptstyle \pi/2$};
				\node at (axis cs: 3*pi/4,-1) {$\scriptstyle 3\pi/4$};
				\node at (axis cs: pi,-1) {$\scriptstyle \pi$};
			\end{axis}
		\end{tikzpicture}
		\caption{Together with Figure \ref{fig:d4_cand1} this gives an illustration of Lemma \ref{lem:d4p34} for $d=4$ and $p=3.5$. Here we see the sequence $\gamma$ with $\gamma_2=\pi/2$. The shaded area represents $E_p(\gamma)$ without considering the integrand factor $1/\sin^2(\theta/2)$. Lemma \ref{lem:d4p34} tells us that $E_p(\gamma) \leq E_p(\tau)$, where $\tau$ is illustrated in Figure \ref{fig:d4_cand1}.} 
		\label{fig:d4_cand2}
	\end{figure}
	
	To prove Theorem \ref{thm:main} for $d=4$ and $3 < p \leq 4$ we need the following lemma, which says that if the supremum in \eqref{eq:1546} is attained by a sequence with $\tau_2 \geq \pi/2$, then necessarily $\tau_2$ is greater than the right endpoint of an interval at level $n$ namely $l(p/2+1)$. See Figure 6 and Figure 7 for an illustration.  
	
	\begin{lemma}\label{lem:d4p34}
		Let $3<p \leq 4$. Let $\gamma$ be a sequence in $\Lambda_4$ such that $\pi/2 \leq \gamma_2 \leq l(p/2+1)$. Let $\tau$ be the sequence in $T_4$ with $\tau_1=\gamma_1$ and $\tau_2=l(p/2+1)$. Then $\tau$ is in $\Lambda_4$ and $E_4(\gamma) \leq E_4(\tau)$. 
	\end{lemma}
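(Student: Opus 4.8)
The plan is to exploit that $\tau$ and $\gamma$ differ in a single coordinate, collapse $E_4(\gamma)\le E_4(\tau)$ to a one–variable integral inequality, and then reduce that inequality to its extreme instance $\gamma_2=\pi/2$.

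Write $a=p+\tfrac12$ (so $a=\tfrac{dp}{4}+\tfrac12$ for $d=4$) and $l=\tfrac{2\pi}{2p+1}=\tfrac{4\pi}{dp+2}$, and put
\[A(\theta)=\frac{\sin(a\theta-\tfrac p2\pi)}{\sin(\theta/2)},\qquad B(\theta)=\frac{\sin(a\theta-p\pi)}{\sin(\theta/2)},\qquad B_+=\max(B,0),\]
the level-$1$ and level-$2$ kernels. Since $\tau_0=\gamma_0=0$, $\tau_1=\gamma_1$, $\tau_3=\gamma_3=\pi$ and $\gamma_2\le\tau_2=l(\tfrac p2+1)$, the functions $M_{4,p}(\tau;\cdot)$ and $M_{4,p}(\gamma;\cdot)$ coincide on $[0,\gamma_2]\cup[\tau_2,\pi]$, while on $(\gamma_2,\tau_2)$ the sequence $\tau$ sits at level $1$ and $\gamma$ at level $2$. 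Because $\gamma_2\ge\pi/2\ge lp/2$ and $\tau_2$ is the right endpoint of the level-$1$ interval $(lp/2,\tau_2)$, we have $A>0$ on $(\gamma_2,\tau_2)$, so there $M_{4,p}(\tau;\cdot)=A$ and $M_{4,p}(\gamma;\cdot)=B_+$. Consequently
\[\pi\bigl(E_4(\tau)-E_4(\gamma)\bigr)=\int_{\gamma_2}^{\tau_2}\bigl(A^2-B_+^2\bigr)\,d\theta=:G(\gamma_2).\]
That $\tau\in\Lambda_4$ is immediate, since $\tau_1=\gamma_1\ge\pi/4$ and $\pi/2\le l(\tfrac p2+1)\le\pi$.

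Next I would show $G(\gamma_2)\ge0$ for every admissible $\gamma_2$ by reducing to $\gamma_2=\pi/2$. On the interval $(l(p-2),l(p-1))\ni\pi/2$ one has $B>0$, and $l(p-1)<\tau_2$, so $B_+=B$ on $(\pi/2,l(p-1))$ and $B_+=0$ on $(l(p-1),\tau_2)$. On $(\pi/2,l(p-1))$, where both kernels are positive, the product-to-sum identity gives $A-B=\tfrac{2}{\sin(\theta/2)}\cos(a\theta-\tfrac{3p}4\pi)\sin\tfrac{p\pi}4$ with $\sin\tfrac{p\pi}4\ge0$, so $A-B$ has the sign of $\cos(a\theta-\tfrac{3p}4\pi)$; this is negative at $\theta=\pi/2$ and positive at $\theta=l(p-1)$, hence changes sign exactly once, at some $\theta^\ast$. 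Thus $B_+^2-A^2$ is positive on $(\pi/2,\theta^\ast)$ and negative on $(\theta^\ast,\tau_2)$, so $G'=B_+^2-A^2$ makes $G$ increase and then decrease on $[\pi/2,\tau_2]$; together with $G(\tau_2)=0$ this yields $\min_{[\pi/2,\tau_2]}G=\min\{G(\pi/2),0\}$. Therefore it suffices to prove $G(\pi/2)\ge0$, i.e.
\[\int_{\pi/2}^{\tau_2}A^2\,d\theta\ \ge\ \int_{\pi/2}^{l(p-1)}B^2\,d\theta.\]
For this last inequality I would pass to a common phase variable. In the left integral set $\psi=a\theta-\tfrac p2\pi$, so that $\theta=\theta_1(\psi):=(\psi+\tfrac p2\pi)/a$ and $\psi$ runs over $(\pi/4,\pi)$; in the right integral set $\psi=a\theta-(p-2)\pi$, so that $\sin^2(a\theta-p\pi)=\sin^2\psi$, $\theta=\theta_2(\psi):=(\psi+(p-2)\pi)/a$, and $\psi$ runs over $(\beta,\pi)$ with $\beta=\tfrac{(9-2p)\pi}4\in(\tfrac\pi4,\tfrac{3\pi}4)$. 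A short computation gives $\theta_1(\psi)-\theta_2(\psi)=\tfrac{(4-p)\pi}{2p+1}=:\delta>0$ (the case $p=4$ is degenerate, with $A\equiv B$ and equality above). Hence
\[a\,G(\pi/2)=\int_{\pi/4}^{\beta}\frac{\sin^2\psi}{\sin^2(\theta_1/2)}\,d\psi+\int_{\beta}^{\pi}\sin^2\psi\left(\frac1{\sin^2(\theta_1/2)}-\frac1{\sin^2(\theta_2/2)}\right)d\psi.\]
The first term is nonnegative; the second is nonpositive because $\theta_2<\theta_1$ and $\sin(\cdot/2)$ is increasing on $(0,\pi)$. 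The main obstacle is exactly this balance: one must show that the nonnegative contribution from the extra phase band $(\pi/4,\beta)$—which is where the level-$1$ kernel carries its heaviest weight, since $\theta_1$ is smallest there—dominates the weight deficit $\sin^{-2}(\theta_2/2)-\sin^{-2}(\theta_1/2)$ accumulated over $(\beta,\pi)$. I expect to control this using the convexity and monotonicity of $\theta\mapsto1/\sin^2(\theta/2)$ to bound the deficit by $\delta$ times a derivative, together with the observation that the cut at $\pi/2$ discards precisely the ascending half of the level-$2$ bump, where the weight would otherwise be largest; this cut is what makes the inequality hold and is the crux of the argument.
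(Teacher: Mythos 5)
Your reduction steps are correct, and in fact more carefully justified than in the paper itself. The identification $\pi\bigl(E_4(\tau)-E_4(\gamma)\bigr)=\int_{\gamma_2}^{\tau_2}(A^2-B_+^2)\,d\theta$ is right, and your sign analysis of $A-B$ via the product-to-sum identity — showing that $B_+^2-A^2$ changes sign exactly once on $(\pi/2,\tau_2)$, so that $G$ increases then decreases and $\min_{[\pi/2,\tau_2]}G=\min\{G(\pi/2),0\}$ — is a rigorous elaboration of what the paper compresses into the phrase ``by a pointwise estimate it suffices to consider $\gamma_2=\pi/2$.'' The membership $\tau\in\Lambda_4$ is also handled correctly.

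However, there is a genuine gap at exactly the point that carries the content of the lemma: you never prove $G(\pi/2)\ge0$. After the phase substitution you split $a\,G(\pi/2)$ into a nonnegative term and a nonpositive term and then state what you ``expect'' to do; this is a plan, not an argument, and it is the crux. Note also that your intended comparison is delicate: as $p\to4$ both the gain band $(\pi/4,\beta)$ and the deficit $\delta$ shrink at the same rate $O(4-p)$, so any convexity/derivative bound must be made uniform in $p\in(3,4]$, which is precisely the nontrivial part. The paper closes this step completely and by a different route: it first enlarges the right-hand integral to the full range $[\pi/2,\,l(p/2+1)]$ (legitimate since $B^2\ge0$ and $l(p-1)\le l(p/2+1)$), so that it suffices to show $f(p)=\int_{\pi/2}^{l(p/2+1)}(A^2-B^2)\,d\theta\ge0$; then the substitution $x=(2p+1)(\theta-\pi/2)$ together with $\sin^2X-\sin^2Y=\sin(X+Y)\sin(X-Y)$ and $\sin^2\bigl(\tfrac{x}{4p+2}+\tfrac\pi4\bigr)=\tfrac12\bigl(1+\sin\tfrac{x}{2p+1}\bigr)$ reduces $f(p)$ to
\[
\frac{2}{2p+1}\left(\sin^2(p\pi/2)\int_0^{3\pi/2}\frac{\sin x}{1+\sin\frac{x}{2p+1}}\,dx+\sin(p\pi/2)\cos(p\pi/2)\int_0^{3\pi/2}\frac{\cos x}{1+\sin\frac{x}{2p+1}}\,dx\right),
\]
and since $\sin(p\pi/2)\cos(p\pi/2)\le0$ for $3<p\le4$, positivity follows from the two elementary estimates $\int_0^{3\pi/2}\frac{\sin x}{1+\sin\frac{x}{2p+1}}\,dx\ge0$ and $\int_0^{3\pi/2}\frac{-\cos x}{1+\sin\frac{x}{2p+1}}\,dx\ge0$, each verified by comparing the monotone weight on the subintervals where the numerator has a fixed sign. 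Until you supply an argument of this kind, or complete your own balance estimate with explicit constants valid for all $3<p\le4$, the lemma is not proved.
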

	\begin{proof}
		It is clear that $\tau$ is in $\Lambda_4$ since $\tau_2 \geq \gamma_2$, so we only need to show $E_4(\gamma) \leq E_4(\tau)$. By a pointwise estimate it suffices to consider $\gamma_2= \pi/2$. Since all other integral parts of $E_p(\tau)$ and $E_p(\gamma)$ will be equal, it suffices to show that
		\[\int_{\pi/2}^{l(p/2+1)} \left(\frac{\sin(\frac{2p+1}{2}\theta-\frac{p}{2}\pi)}{\sin \frac{\theta}{2}}\right)^2 \, d\theta \geq \int_{\pi/2}^{l(p-1)} \left(\frac{\sin(\frac{2p+1}{2}\theta-p\pi)}{\sin \frac{\theta}{2}}\right)^2 \, d\theta.\]
		Since the integrands are positive and $l(p-1) \leq l(p/2+1)$ it suffices to show $f(p) \geq 0$, where
		\[f(p)=\int_{\pi/2}^{l(p/2+1)} \left(\frac{\sin(\frac{2p+1}{2}\theta-\frac{p}{2}\pi)}{\sin \frac{\theta}{2}}\right)^2 - \left(\frac{\sin(\frac{2p+1}{2}\theta-p\pi)}{\sin \frac{\theta}{2}}\right)^2\, d\theta.\]
		We recall that since $d=4$ then $l=2\pi/(2p+1)$. Therefore by making the substitution $u=(\theta-\pi/2)(2p+1)$ and then using trigonometric identities we see that
		\[\begin{split}
			f(p)&=\frac{1}{2p+1} \int_{0}^{3 \pi/2} \frac{\big(\sin(\frac{\theta}{2}+\frac{\pi}{4})\big)^2-\big(\sin(\frac{\theta}{2}+\frac{\pi}{4}-\frac{\pi}{2}p)\big)^2}{\big(\sin( \frac{\theta}{4p+2}+\frac{\pi}{4})\big)^2} \, d\theta\\
			&=\frac{2\sin(-p\pi/2)}{2p+1}\int_{0}^{3\pi/2} \frac{\sin \theta \sin (-p\pi/2)-\cos \theta \cos (p\pi/2)}{1+\sin \frac{\theta}{2p+1}} \, d\theta.
		\end{split}\]
		It follows that $f(p)\geq 0$ since
		\[\int_{0}^{3 \pi/2} \frac{\sin \theta}{1+\sin \frac{\theta}{2p+1}} \, d\theta \geq \int_{0}^{\pi} \frac{\sin \theta}{1+\sin \frac{\theta}{2 \cdot 3+1}} \, d\theta +\int_{\pi}^{3 \pi/2} \frac{\sin \theta}{1+\sin \frac{\theta}{2 \cdot 4+1}} \, d\theta \geq 0,\]
		and
		\[\int_{0}^{3 \pi/2} \frac{-\cos \theta}{1+\sin \frac{\theta}{2p+1}} \, d\theta \geq \int_{0}^{\pi/2} \frac{-\cos \theta}{1+\sin \frac{\theta}{2 \cdot 4+1}} \, d\theta +\int_{\pi/2}^{3 \pi/2} \frac{\sin \theta}{1+\sin \frac{\theta}{2 \cdot 3+1}} \, d\theta \geq 0. \qedhere \]
	\end{proof}
	To prove the upper bound from Theorem \ref{thm:main} we in fact prove a slightly stronger bound given in the theorem below. Figure \ref{fig:upd234} shows the plot of these upper bounds, and Figures \ref{fig:d3p103} and \ref{fig:d4_cand1} illustrate the bounds with the corresponding sequences in $\Lambda_d$. We recall that $l=4\pi/(dp+2)$ is the length of an interval at level $n$. (Below we use the notation $\tau^d$ to indicate the dependence on $d$ in the sequence, not to be confused with an exponent.)
	\begin{theorem}\label{thm:uppergood}
		Let $2 \leq p \leq 4$ and $2 \leq d \leq 4$. Then $\mathscr{C}_{d,p} \leq E_p(\tau^d)$, where 
		\[\begin{split}
			\tau^2&=(0,\tau_1^2, \pi),\\
			\tau^3&=(0, \tau_1^3, \pi),\\
			\tau^4&=(0, \tau_1^4, \tau_2^4, \pi),
		\end{split}\]
		and where $\tau_1^d$ and $\tau_2^d$ are any numbers satisfying $l \leq \tau_1^d \leq  lp/2$ and $l(p+2)/2 \leq \tau_2^d \leq pl$. 
	\end{theorem}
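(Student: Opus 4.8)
The plan is to start from the bound $\mathscr{C}_{d,p}\le\sup_{\tau\in\Lambda_d}E_p(\tau)$ furnished by \eqref{eq:1546} and Theorem~\ref{thm:cor61analogi}, and to show that this supremum is attained at $\tau^d$; since the asserted inequality is exactly $\mathscr{C}_{d,p}\le E_p(\tau^d)$, it suffices to prove $E_p(\tau)\le E_p(\tau^d)$ for every $\tau\in\Lambda_d$ and to check that $E_p(\tau^d)$ does not depend on the admissible choices of $\tau_1^d$ and $\tau_2^d$. First I would record the relevant geometry. The first interval at level $0$ is $(0,l)$, the first interval at level $1$ is $(lp/2,\,l(p+2)/2)$, and, for $d=4$, the first interval at level $2$ is $(lp,\,l(p+1))$. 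Consequently the segments $[l,lp/2]$ and (for $d=4$) $[l(p+2)/2,lp]$ are \emph{dead zones}: on each of them the two relevant level sines are nonpositive, so $M_{d,p}(\tau;\cdot)=0$ there no matter where the corresponding jump is placed. For $2\le p\le 4$ the neighbouring humps do not re-enter these zones (one checks $lp/2\le 2l$, and $l(p-1)\le l(p+2)/2\le lp\le l(p+4)/2$), which is precisely why moving $\tau_1^d$ within $[l,lp/2]$ or $\tau_2^d$ within $[l(p+2)/2,lp]$ leaves $E_p$ unchanged; this establishes that $E_p(\tau^d)$ is well defined.

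For the inequality, take an arbitrary $\tau\in\Lambda_d$. I would first apply Lemma~\ref{lem:tleq} to assume $\tau_n\le npl/2$, so that $\tau_1\le lp/2$ and, when $d=4$, $\tau_2\le lp$. Next I move $\tau_1$ into the dead zone $[l,lp/2]$. By Lemma~\ref{lem:Turan} we have $\tau_1\ge\pi/d$, and since for $p\le 4$ the near interval at level $1$ ends at $l(p-2)/2\le\pi/d$, the level-$1$ sine is nonpositive on $(\tau_1,l)$; hence delaying the jump to $l$ only adds a nonnegative level-$0$ contribution and does not decrease $E_p$ (this is the case $r=0$ of Lemma~\ref{lem:mid}, whose crossover lies below $\pi/d\le\tau_1$). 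For $d\in\{2,3\}$ this places $\tau_1$ in the constancy range and already gives $E_p(\tau)\le E_p(\tau^d)$.

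It remains to treat the jump $\tau_2$ for $d=4$, where by Lemma~\ref{lem:d4pi2} we may assume $\pi/2\le\tau_2\le lp$, and we must push $\tau_2$ into $[l(p+2)/2,lp]$. The decisive quantity is the crossover $M(p)=\pi(3p-2)/\bigl(2(2p+1)\bigr)$ between the far interval at level $1$ and the near interval at level $2$, where the two sines coincide; a short computation shows $M(p)\le\pi/2$ precisely when $p\le 3$. Thus for $2\le p\le 3$ we have $\tau_2\ge\pi/2\ge M(p)$, so Lemma~\ref{lem:mid} (with $r=1$ and $b=l(p+2)/2$) pushes $\tau_2$ up to $l(p+2)/2$ without decreasing $E_p$. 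For $3<p\le 4$ one instead has $M(p)>\pi/2$, the pointwise domination of level $1$ over level $2$ fails on $(\pi/2,M(p))$, and the gain can be recovered only after integration; this is exactly the content of Lemma~\ref{lem:d4p34}, whose averaged estimate $f(p)\ge 0$ yields the same conclusion. In either subcase $\tau_2$ ends up in $[l(p+2)/2,lp]$, and the constancy from the first paragraph gives $E_p(\tau)\le E_p(\tau^d)$.

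The main obstacle is precisely this last subcase $d=4$, $3<p\le 4$: monotonicity of the level assignment forces us to keep using level $1$ on a sub-interval where level $2$ is instantaneously larger, so no pointwise comparison is available and the required inequality survives only in integrated form, which is the reason Lemma~\ref{lem:d4p34} is needed. The remaining points are routine: the two jumps may be handled independently because their active regions are separated (since $\tau_1\le lp/2<\pi/2\le\tau_2$ for $d=4$), and each reduction keeps the sequence inside $\Lambda_d$ by the membership clauses built into Lemmas~\ref{lem:tleq}, \ref{lem:mid} and \ref{lem:d4p34}.
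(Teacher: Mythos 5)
Your proposal is correct and follows essentially the same route as the paper's own proof: the bound \eqref{eq:1546} over $\Lambda_d$ (restricted via Lemmas \ref{lem:Turan} and \ref{lem:d4pi2}), the cap $\tau_n\le npl/2$ from Lemma \ref{lem:tleq}, pushing $\tau_1$ to $[l,lp/2]$ via the level-$0$/level-$1$ crossover and Lemma \ref{lem:mid}, and for $d=4$ the split at $p=3$ between Lemma \ref{lem:mid} and Lemma \ref{lem:d4p34}, concluded by the constancy of $E_p$ on the ranges in the statement. If anything, your explicit dead-zone verification of that constancy, and your correctly oriented comparison $M_1\le\pi/d\le\tau_1$ (the paper's text states $M_1=l/2$ and $M_1\ge\pi/d$, which is evidently a typo), is slightly more careful than the paper's own write-up.
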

	
	\begin{proof}
		In the proof we omit the notation $\tau^d$ and instead write $\tau$, but note that $\tau$ still depends on $l$ and thus on $d$ since $l=4 \pi /(dp+2)$. By Theorem \ref{thm:cor61analogi}, letting $\Lambda_d$ be the set of sequences that may serve as the set of arguments of zeroes of the extremal function, it follows that 
		\[\mathscr{C}_{d,p} \leq \sup_{\tau \in \Lambda_d} \frac{1}{\pi} E_{d,p}(\tau).\]
		In particular, due to Lemma \ref{lem:Turan} and Lemma \ref{lem:d4pi2} we let $\Lambda_d$ be as defined in \eqref{eq:Lambdad} and \eqref{eq:Lambda4} respectively. Assume $\tau$ is a sequence in $\Lambda_d$ such that $E_{d,p}(\lambda) \leq E_{d,p}(\tau)$ for all $\lambda$ in $\Lambda_d$. By Lemma \ref{lem:tleq}, we may assume $\tau_n \leq npl/2$. Furthermore letting $M_1=l/2$ be the midpoint between $(0, l)$ and its intersecting interval at level $n+1$ we have $M_1 \geq \pi/d$, and so by Lemma \ref{lem:mid} we must have $\tau_1 \geq l$. This concludes the proof for $d=2$ and $d=3$ since $\tau_1$ is the only variable determining $E_{d,p}(\tau)$ in this case, and clearly $E_{d,p}(\tau)$ obtains the same value for any $l \leq \tau_1 \leq lp/2$. 
		
		Now consider $d=4$. From above we know that $l \leq \tau_1 \leq lp/2$. By assumption $\tau_2 \geq \pi/2$, and by Lemma \ref{lem:tleq} we have $\tau_2 \leq pl$. Let $M_2=(3p-2)\pi/(4p+2)$ be the midpoint between the second interval at level $1$ and the overlapping interval at level $2$. If $2 \leq p \leq 3$ it follows that $\tau_2 \geq \pi/2 \geq M_2$ and so by Lemma \ref{lem:mid} it follows that $\tau_2 \geq l(p+2)/2$. For $3 \leq p \leq 4$ it follows from Lemma \ref{lem:d4p34} that $ \tau_2 \geq l(p+2)/2$. That is $l \leq \tau_1 \leq lp/2$ and $l(p+2)/2 \leq \tau_2 \leq pl$, and all such sequences give rise to the same value for $E_p(\tau)$, which concludes the proof.
	\end{proof}
	
	Due to a numerical analysis we expect the zeroes of the extremal function $\varphi_{d, p}$ to be within the set of sequences $\tau$ given in Theorem \ref{thm:uppergood}. In particular, this means that further information on the zeroes of the extremal function $\varphi_{d, p}$ will not improve on the upper bound for $\mathscr{C}_{d,p}$ from Theorem \ref{thm:uppergood}, given that we use the same technique. Finally, we conclude the proof of Theorem \ref{thm:main} by showing that the upper bounds on $\mathscr{C}_{d,p}$ from Theorem \ref{thm:uppergood} are in fact better than $dp/2+1$. 
	
	\begin{proof}[Proof of Theorem \ref{thm:main}]
		Let $2 \leq d \leq 4$ be an integer and let $2 \leq p \leq 4$. The upper bounds from Theorem \ref{thm:uppergood} can be expressed as follows:
		\[\begin{split}
			\mathscr{C}_{2,p} &\leq \frac{1}{\pi} \int_{0}^{l} \left(\frac{\sin ( \frac{p+1}{2} \theta)}{\sin \frac{\theta}{2}} \right)^2 d\theta+ \frac{1}{\pi}\int_{lp/2}^{\pi} \left(\frac{\sin ( \frac{p+1}{2} \theta-\frac{p}{2}\pi)}{\sin \frac{\theta}{2}} \right)^2 d\theta\\
			\mathscr{C}_{3,p} & \leq
			\frac{1}{\pi}\int_{0}^{l} \left(\frac{\sin ( \frac{3p+2}{4} \theta)}{\sin \frac{\theta}{2}} \right)^2 d\theta+ \frac{1}{\pi}\int_{lp/2}^{l(p/2+1)} \left(\frac{\sin ( \frac{3p+2}{4} \theta-\frac{p}{2}\pi)}{\sin \frac{\theta}{2}} \right)^2 d\theta\\
			\mathscr{C}_{4,p} & \leq  \frac{1}{\pi}\int_{0}^{l} \left(\frac{\sin ( \frac{2p+1}{2} \theta)}{\sin \frac{\theta}{2}} \right)^2 d\theta+ \frac{1}{\pi}\int_{lp/2}^{l(p/2+1)} \left(\frac{\sin ( \frac{2p+1}{2} \theta-\frac{p}{2}\pi)}{\sin \frac{\theta}{2}} \right)^2 d\theta\\&+ \frac{1}{\pi}\int_{lp}^{\pi} \left(\frac{\sin ( \frac{2p+1}{2} \theta-p\pi)}{\sin \frac{\theta}{2}} \right)^2 d\theta.
		\end{split}\]
		Using that $\sin x/2$ is increasing for $0<x<\pi$ it follows that these upper bounds are all less than or equal to $A_{d,p}$ where 
		\[A_{d,p}=\frac{1}{\pi} \int_{0}^{2\pi(d+1)/(dp+2)} \left( \frac{\sin \big( \frac{dp+2}{4} \theta \big)}{\sin \frac{\theta}{2}}\right) \, d\theta.\]
		By a substitution, we see that the inequality $A_{d,p} \leq dp/2+1$ is equivalent to 
		\begin{equation}\label{eq:p2} \frac{2}{\pi} \frac{4}{(dp+2)^2} \int_{0}^{\pi(d+1)/2} \left( \frac{\sin \theta}{\sin \frac{2}{dp+2} \theta }\right)^2 \, d\theta \leq 1.
		\end{equation} A straightforward computation shows that \eqref{eq:p2} is attained for $p=2$. Thus we conclude that \eqref{eq:p2} also holds for all $2<p \leq 4$ by the pointwise estimate
		\[\frac{4}{(dp+2)^2}\left( \frac{\sin \theta}{\sin \frac{2}{dp+2} \theta }\right)^2 \leq \frac{4}{(2d+2)^2}\left( \frac{\sin \theta}{\sin \frac{2}{2d+2} \theta }\right)^2. \]
		This concludes the proof. 
	\end{proof}
	We do not move beyond $d > 4$ since we do not have sufficient information about the arguments of the zeroes of the extremal function when $d \geq 5$. In particular, we cannot invoke Lemma \ref{lem:mid} for all elements as we cannot justify assuming that the arguments are bigger than the related midpoints between intervals at level $n$ and $n+1$. 
	Note that one can show that our technique is \emph{not} good enough to verify the conjecture $\mathscr{C}_{d,p} \leq dp/2+1$ for large $d$. For example, for $p=5/2$ and $d=6$ the supremum \eqref{eq:1546} with the restrictions $\tau_2 \geq \tau_1 \geq \pi/d$ and $\tau_3 \geq \pi/2$ is at least $8.6>dp/2+1=8.5$. With more information about the zeroes this could be improved. 
	
	We also note that if for $2 \leq p \leq 4$ and any $d \geq 2$ we knew that the arguments of the zeroes of the extremal function satisfy $t_j \geq (2pj-p-1)\pi/(dp+2)$, which is the midpoint between an interval at level $n-1$ and $n$, then we could generalize Theorem \ref{thm:uppergood}, and in particular prove the conjectured upper bound $\mathscr{C}_{d,p} \leq dp/2+1$. We expect the zeroes of the extremal function to satisfy this condition, since we expect that $t_j$ is somewhat close to $\gamma_j$, where $\gamma_j=(2-p+2pj)\pi/(dp+2)$. 
	
	\section{Proof of Theorem \ref{thm:plarge}}\label{sec:plarge}
	In this section we prove Theorem \ref{thm:plarge}, which gives an upper bound for $\mathscr{C}_{d,p}$ when $p$ is large. We use a technique similar to that in \cite[Section 7]{Brevig}. First, however, we need the following lemma by Hylt\'en-Cavallius \cite[Theorem 1]{Cavallius}.

	\begin{lemma}\label{lem:Cavllius}
		Let $P(e^{i\theta})$ be a polynomial of degree $d \geq 1$ which is maximized for $\theta=0$ and where $\|P\|_\infty=P(1)=1$. Then
		\[|P(e^{i \theta})| \geq \|P\|_\infty \cos( d \theta/2),\] for any $|\theta| \leq \pi/d$. 
	\end{lemma}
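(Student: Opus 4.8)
The plan is to control how fast the modulus $u(\theta):=|P(e^{i\theta})|$ can decay away from its maximum $u(0)=1$ by establishing a sharp first-order differential inequality and then integrating it. The guiding example is $P(z)=(1+z^d)/2$, for which $P(1)=1$, $\|P\|_\infty=1$, and $u(\theta)=|\cos(d\theta/2)|$, so that equality holds throughout $[-\pi/d,\pi/d]$. This both pins down the extremal constant $d/2$ and tells us what inequality to aim for, namely
\[(u'(\theta))^2+\tfrac{d^2}{4}\,u(\theta)^2\le \tfrac{d^2}{4}.\]

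First I would reduce this pointwise inequality to a classical one. Writing $R(\theta)=e^{-id\theta/2}P(e^{i\theta})$, we have $|R|=u$, $\|R\|_\infty=\|P\|_\infty=1$, and $R$ is an exponential sum with frequencies in $[-d/2,d/2]$. Fix $\theta_0$ with $u(\theta_0)>0$ and write $R(\theta_0)=u(\theta_0)e^{i\alpha}$; then the real-valued function $T(\theta)=\mre\big(e^{-i\alpha}R(\theta)\big)$ has the same spectrum, satisfies $\|T\|_\infty\le1$, and a short computation gives $T(\theta_0)=u(\theta_0)$ together with $T'(\theta_0)=\mre\big(e^{-i\alpha}R'(\theta_0)\big)=u'(\theta_0)$, using $u'=\mre(R'\bar R)/|R|$. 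Applying Szeg\H{o}'s inequality $T'(\theta_0)^2+(d/2)^2T(\theta_0)^2\le(d/2)^2$ to the real trigonometric polynomial $T$ of degree $d/2$ then yields exactly the displayed bound on $u$.

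Finally I would integrate. On any subinterval of $[0,\pi/d)$ where $u>0$, set $\psi(\theta)=\arccos u(\theta)\in[0,\pi/2)$; then $\psi(0)=0$ and, wherever $u<1$, the bound above gives $|\psi'|=|u'|/\sqrt{1-u^2}\le d/2$, so $\psi$ is $\tfrac d2$-Lipschitz and $\psi(\theta)\le d\theta/2\le\pi/2$. Since $\cos$ is decreasing on $[0,\pi/2]$, this gives $u(\theta)=\cos\psi(\theta)\ge\cos(d\theta/2)$; because the right-hand side is positive on $[0,\pi/d)$, $u$ cannot vanish there, so the argument self-consistently covers the whole interval, and the case $\theta<0$ follows by the symmetry $\theta\mapsto-\theta$.

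The main obstacle is the sharp Szeg\H{o} inequality in the form I need. For $d$ even, $T$ is an ordinary trigonometric polynomial of integer degree $d/2$ and the inequality is classical. For $d$ odd, $T$ carries half-integer frequencies, so one must instead invoke the sharp Bernstein--Szeg\H{o} inequality for exponential sums with spectrum contained in $[-d/2,d/2]$; establishing this (for instance by the standard oscillation/sign-change argument comparing $T$ with $A\cos(\tfrac d2(\theta-\theta_0)-\phi)$ on a suitable interval) is the technical heart of the proof. Secondary points to be handled are the non-differentiability of $u$ at zeros of $P$ and at interior maxima where $u=1$; these are resolved by noting that $\psi$ is absolutely continuous and integrating the bound on $\psi'$ that holds almost everywhere.
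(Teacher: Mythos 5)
Your proof is correct, but there is nothing in the paper to compare it against: the paper does not prove this lemma at all, it simply quotes it from Hylt\'en-Cavallius \cite[Theorem 1]{Cavallius} and proceeds directly to the proof of Theorem \ref{thm:plarge}. What you have done is reconstruct a self-contained argument of the classical type: pass to $T(\theta)=\mre\bigl(e^{-i\alpha}e^{-id\theta/2}P(e^{i\theta})\bigr)$, apply the Szeg\H{o}--van der Corput--Schaake inequality to obtain the pointwise bound $u'(\theta)^2\le (d/2)^2\bigl(1-u(\theta)^2\bigr)$ for $u=|P|$ where $u>0$, and integrate via $\psi=\arccos u$. The individual steps check out: $T(\theta_0)=u(\theta_0)$ and $T'(\theta_0)=u'(\theta_0)$ are verified correctly, the first-zero bootstrap does keep $u>0$ on $[0,\pi/d)$ because the comparison function $\cos(d\theta/2)$ is strictly positive there, and the exceptional set where $u=1$ is finite (unless $u\equiv 1$, a trivial case) since $|P(e^{i\theta})|^2$ is a real-analytic trigonometric polynomial, so the Lipschitz bound on $\psi$ survives the almost-everywhere differentiability issue. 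One substantive remark: the step you single out as the technical heart --- the sharp Bernstein--Szeg\H{o} inequality for the half-integer frequencies arising when $d$ is odd --- does not require any oscillation/sign-change argument or Duffin--Schaeffer-type theorem, because it follows from the integer case by a change of variable: $S(\theta):=T(2\theta)$ is a genuine real trigonometric polynomial of degree at most $d$ with $\|S\|_\infty\le 1$, so $(S')^2+d^2S^2\le d^2$, which upon substituting back gives exactly $(T')^2+(d/2)^2T^2\le(d/2)^2$. With that simplification your argument is an elementary and complete proof of the lemma, and it is a genuine addition relative to the paper, which leaves the reader to chase the reference.
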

	
	\begin{proof}[Proof of Theorem \ref{thm:plarge}]
		Let $P(e^{i\theta})$ be a polynomial of degree $d$. By rotating we may assume the function is maximized for $\theta=0$, where $\|P\|_\infty=P(1)=1$. Then from Lemma \ref{lem:Cavllius}
		it follows that 
		\[|P(e^{i \theta})| \geq \|P\|_\infty \cos( d \theta/2),\] for any $|\theta| \leq \pi/d$. Raising both sides to the power $p$ and integrating yields that
		\[ \int_{-\pi/d}^{\pi/d} |P(e^{i\theta})|^p\, d \theta \geq \|P\|_\infty^p \int_{-\pi/d}^{\pi/d} (\cos(d \theta/2))^p \, d\theta=\frac{2}{d} B((p+1)/2,1/2).\] 
		Hence it follows that 
		\[\|P\|_p^p \geq \frac{B((p+1)/2,1/2)}{\pi d}, \]
		and so by \eqref{eq:extprob} it follows that $\mathscr{C}_{d,p} \leq d \pi/B((p+1)/2,1/2)$.
	\end{proof}
	
	Due to well-known asymptotics for the beta function it follows from Theorem \ref{thm:plarge} that as $p \to \infty$ we have 
	\[\mathscr{C}_{d,p} \leq d\left( \sqrt{\frac{\pi p}{2}}+O\left(\frac{1}{\sqrt{p}}\right) \right).\]
	Kershaw \cite{Kershaw} has shown that if $0<s<1$ then
	\begin{equation}\label{eq:Kershaw}
		\left(x+\frac{s}{2}\right)^{1-s}<\frac{\Gamma(x+1)}{\Gamma(x+2)},
	\end{equation}
	for all $x > 0$.
	Using \eqref{eq:Kershaw} with $s=1/2$ we see that
	\[B(x,1/2)=\frac{\Gamma(x)\Gamma(1/2)}{\Gamma(x+1/2)}=\frac{\sqrt{\pi} \Gamma(x+1)}{x \Gamma(x+1/2)} > \frac{\sqrt{\pi(x+1/4)}}{x},\]
	for any $x>0$. 
	Consequently, it follows that 
	\[\mathscr{C}_{d,p}<\frac{d  \sqrt{\pi}(p+1)}{\sqrt{2p+3}},\]
	and in particular if $p>6.8$ then $\mathscr{C}_{d,p} \leq dp/2$.
	\begin{figure}
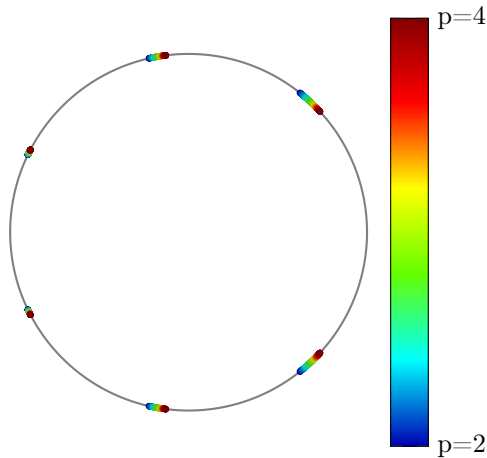

		\begin{tikzpicture}
			\begin{axis}[
				axis equal image,    
				axis lines=none,     
				ticks=none,             
				%colormap/viridis,
				colormap/bluered,
				point meta min=0,
				point meta max=1,
				colorbar,
				colorbar style={
					ytick={0,1},          
					yticklabels={p=2,p=4} 
				},
				]
				\input{data_plot_1.tex}
				\input{data_plot_2.tex}
				\input{data_plot_3.tex}
				\input{data_plot_4.tex}
				\input{data_plot_5.tex}
				\input{data_plot_6.tex}
				\addplot[
				domain=0:2*pi,
				samples=200,
				thick,
				gray
				]
				({cos(deg(x))}, {sin(deg(x))});
				
			\end{axis}
		\end{tikzpicture}
		\caption{The expected zeroes of the extremal function $\varphi_{d,p}$ for $d=6$ and $2 \leq p \leq 4$ based on numerical optimization.}
		\label{fig:d6}
	\end{figure}
	\section{Numerical estimates}\label{sec:num}
	
	Consider the polynomial $Q$ of degree $d$ with the zeroes $e^{\pm \gamma_j i}$ where the argument is defined by $\gamma_j=\pi(2-p+2pj)/(dp+2)$, for $1 \leq j \leq \lfloor d/2 \rfloor$, and if $d$ is odd the last zero is $z=-1$. Then $1/\|Q\|_p^p$ provides a good lower bound for $\mathscr{C}_{d,p}$. In particular we have the following
	
	\begin{theorem}\label{thm:plowergen}
		Let $d \geq 1$ be an integer and let $\gamma_j=(2-p+2pj)\pi/(dp+2)$. Then  $\mathscr{C}_{d,p} \geq 1/C$, where
		\[C=\frac{1}{\pi}\int_{0}^{\pi} \prod_{j=1}^{d/2} \left|\frac{\cos x - \cos \gamma_j}{1-\cos \gamma_j} \right|^p \, d\theta,\]
		if $d$ is even and
		\[C=\frac{1}{\pi}\int_{0}^{\pi} |\cos x/2|^p \prod_{j=1}^{d/2} \left|\frac{\cos x - \cos \gamma_j}{1-\cos \gamma_j}\right|^p \, d\theta,\]
		if $d$ is odd.
	\end{theorem}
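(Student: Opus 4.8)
The plan is to prove this as a \emph{test-function} lower bound. By the equivalent formulation \eqref{eq:extprob} we have $\mathscr{C}_{d,p} = 1/\inf\{\|P\|_p^p : P \in \mathscr{P}_d,\ P(1)=1\}$, so exhibiting any admissible polynomial $Q$ with $Q(1)=1$ immediately yields $\mathscr{C}_{d,p} \geq 1/\|Q\|_p^p$. The entire content of the theorem is therefore the \emph{choice} of the candidate $Q$ (the zeroes placed at the arguments $\pm\gamma_j$, which are the midpoints guided by the discussion at the end of Section~\ref{sec:mainproof} and by the numerics illustrated in Figure~\ref{fig:d6}), together with an evaluation of its $L^p$ norm; there is no genuine optimization to carry out.

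First I would define the candidate explicitly. For $d$ even, set
\[Q(z) = \prod_{j=1}^{k} \frac{(z-e^{i\gamma_j})(z-e^{-i\gamma_j})}{(1-e^{i\gamma_j})(1-e^{-i\gamma_j})}, \qquad k = \lfloor d/2\rfloor,\]
and for $d$ odd multiply this by the factor $(z+1)/2$ so as to place the additional zero at $z=-1$. In either case $Q$ is a polynomial of degree $d$ lying in $\mathscr{P}_d$, its structure agrees with Lemma~\ref{lem:structure}, and the normalization has been chosen precisely so that $Q(1)=1$, so $Q$ is admissible in \eqref{eq:extprob}.

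Next I would compute $|Q(e^{i\theta})|$ on the circle. Using the elementary identity
\[(e^{i\theta}-e^{i\gamma_j})(e^{i\theta}-e^{-i\gamma_j}) = 2e^{i\theta}(\cos\theta - \cos\gamma_j)\]
already used in the proofs of Theorem~\ref{thm:24analogi} and Lemma~\ref{lem:d4pi2}, together with the companion relation $(1-e^{i\gamma_j})(1-e^{-i\gamma_j}) = 2(1-\cos\gamma_j)$, the unimodular prefactors cancel and one obtains
\[|Q(e^{i\theta})| = \prod_{j=1}^{k} \left|\frac{\cos\theta-\cos\gamma_j}{1-\cos\gamma_j}\right|,\]
with the extra factor $|1+e^{i\theta}|/2 = |\cos(\theta/2)|$ in the odd case. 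Finally, using $Q(e^{i\theta}) = \overline{Q(e^{-i\theta})}$ to fold the integral over $[-\pi,\pi]$ onto $[0,\pi]$, the definition of $\|Q\|_p^p$ gives exactly the quantity $C$ in the statement, whence $\mathscr{C}_{d,p} \geq 1/\|Q\|_p^p = 1/C$.

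If there is any step requiring care, it is only the bookkeeping of the normalization constant and the reduction of the full-circle integral to $[0,\pi]$ via the symmetry of the zeroes; there is no genuine analytic obstacle, since the inequality follows as soon as any admissible $Q$ is produced. What is \emph{not} proved here, and is a separate matter, is the \emph{quality} of the bound — how close $1/C$ is to $\mathscr{C}_{d,p}$ — which is addressed numerically in Section~\ref{sec:num} and depends on how well the prescribed arguments $\gamma_j$ approximate the true zeroes of the extremal function $\varphi_{d,p}$.
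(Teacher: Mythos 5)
Your proposal is correct and is essentially the paper's own (implicit) argument: the paper introduces the same polynomial $Q$ with zeroes at $e^{\pm i\gamma_j}$ (plus $z=-1$ when $d$ is odd) just before the theorem and treats the bound as an immediate consequence of the test-function formulation \eqref{eq:extprob}, exactly as you do. Your write-up in fact supplies the details the paper omits — the normalization check $Q(1)=1$, the identity $(e^{i\theta}-e^{i\gamma_j})(e^{i\theta}-e^{-i\gamma_j})=2e^{i\theta}(\cos\theta-\cos\gamma_j)$, and the folding of the integral onto $[0,\pi]$ — so there is nothing to correct.
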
	
	
	Observe that for $p=2$ the polynomial $Q$ is the extremal function for \eqref{eq:extprob}, and the lower bound is equal to $\mathscr{C}_{d,2}=d+1$. We also note that for $2 \leq p \leq 4$ the sequence $\gamma$ is one of the sequences attaining the supremum $\sup_{\tau \in \Lambda_d} E_{d,p}(\tau)$ and providing the upper bound for $\mathscr{C}_{d,p}$ in Theorem \ref{thm:uppergood}. Clearly one can obtain a slightly better lower bound for $\mathscr{C}_{d,p}$ by numerically optimizing over all polynomials. The true bound is still close to the bound stated in Theorem \ref{thm:plowergen}.
	
	Based on numerical computations we also expect the arguments of the zeroes of the extremal function to be somewhat close to $\gamma_j$. In particular we expect the $j$th argument to be greater than $ (2pj-p-1)\pi/(dp+2)$, which as mentioned in Section \ref{sec:mainproof} is enough to show the conjecture $\mathscr{C}_{d,p} \leq dp/2+1$. Figure \ref{fig:d6} illustrates the expected zeroes of the extremal function $\varphi_{6,p}$ for $2 \leq p \leq 4$ based on numerical optimization. We note that the numerical estimates support both the conjecture $\mathscr{C}_{d,p} \leq dp/2+1$, and that $\mathscr{C}_{d,p}/(dp/2+1)$ is decreasing in $p$. 
	\bibliographystyle{amsplain} 
	\bibliography{poly} 
	\appendix
\end{document}